\begin{document}
\newtheorem{proposition}{Proposition}
\newtheorem{corollary}{Corollary}
\newcommand{\ds}{\displaystyle}
\newtheorem{definition}{Definition}[section]
\newtheorem{theorem}{Theorem}
\newtheorem{lemma}{Lemma}
\newcommand\scaleSizeA{0.22}
\newcommand\scaleSizeB{0.14}
\parskip=12pt


\title[Edge conflicts and geodesics]{Edge conflicts do not determine geodesics in the associahedron}

\author{Sean Cleary}
\address{Department of Mathematics \\
The City College of New York and the CUNY Graduate Center\\
City University of New York \\
New York, NY 10031}
\email{cleary@ccny.cuny.edu}
\urladdr{{\tt http://www.sci.ccny.cuny.edu/$\sim$cleary}}

\author{Roland Maio}
\address{Department of Computer Science \\
The City College of New York\\
City University of New York \\
New York, NY 10031}
\email{rolandmaio38@gmail.com}

\thanks{
	This material is based upon work supported by the National Science Foundation under Grant No. \#1417820.
}
\keywords{random binary trees}

\begin{abstract}
There are no known efficient algorithms to calculate distance in the one-skeleta of associahedra, a problem 
that is equivalent to finding rotation distance between rooted binary trees or the flip distance between polygonal triangulations.
One measure of the difference between trees is the number of conflicting edge pairs, and a natural way of trying to find short paths is to minimize successively this number of conflicting edge pairs using flip operations in the corresponding triangulations.
We describe examples that show that the number of such conflicts does not always decrease along geodesics.  Thus, a greedy algorithm that always chooses a transformation that reduces conflicts will not produce a geodesic in all cases.  Further, for any specified amount, there are examples of pairs of all large sizes showing that the number of conflicts can increase by that amount along any geodesic between  the pairs.
\end{abstract}

\maketitle

\section{Introduction}

Rooted binary trees arise across a range of areas, from phylogenetic trees representing genetic relationships to efficient organizational structures in large datasets.   When considering two rooted binary trees, there are a wide
range of possible measures of distance between them, depending upon the situation and how much structure we attach to the trees.   In the setting of rooted binary trees with a natural right-to-left order, such as those corresponding to binary search trees, a widely-considered distance is rotation distance.  A {\em left rotation} at a node $n$ of a tree $S$ is an operation which promotes the rightmost grandchild of $n$ to become a right child of $n$, changes the left child of the right child of $n$ to become the right child of the left child of $n$, and demotes the left child of $n$ to become the leftmost grandchild of $n$, as pictured in Figure \ref{fig:exampletree1}, where the subtree $e$ containing the sibling pair $(4,5)$ of $d$ is promoted from rightmost grandchild of $b$ to become the right child of $b'$, and leaf 3 remains a grandchild but is moved to the left, and node $c$ containing the sibling pair $(1,2)$ is demoted from left child to leftmost grandchild.  A {\em right rotation} is the inverse operation, for example taking the right tree of Figure  \ref{fig:exampletree1} to the left one. The {\em rotation distance $d(S,T)$} between two trees $S$ and $T$ is the minimum number of rotations needed to transform the tree $S$ to the tree $T$.  There is a natural bijection between rooted binary trees with $n$ internal nodes (or $n+1$ leaves) and triangulations of a marked regular polygon with $n+2$ sides, shown in Figure \ref{fig:exampletree1}, where the tree is the dual to the triangulation and the root of the tree corresponds to the edge marked $R$ in the triangulation. A rotation in a binary tree corresponds to an edge flip in a triangulation as pictured. Thus any sequence of rotations between two trees is exactly equivalent to a corresponding sequence of edge flips between triangulations, as will be described below.

The associahedron of size $n$ is a combinatorial object capturing many aspects of Catalan objects, including triangulations, trees, and bracketed expressions.
 Here, by the associahedron graph of size $n$ we mean the graph whose vertices are triangulations of a marked regular $(n+2)$-gon (equivalently, rooted trees with $n+1$ leaves) and where two vertices $S$ and $T$ are connected if there is a single edge flip that transforms the triangulation $S$ to $T$ (equivalently, if the associated rooted trees differ by a single rotation at a node.)  This graph is the one-dimensional skeleton of  higher-dimensional polytopes known as associahedra.
 Here, we work entirely in the one-dimensional skeleton and neglect the higher dimensional faces in the face lattices of associahedra.
 
 \begin{figure}
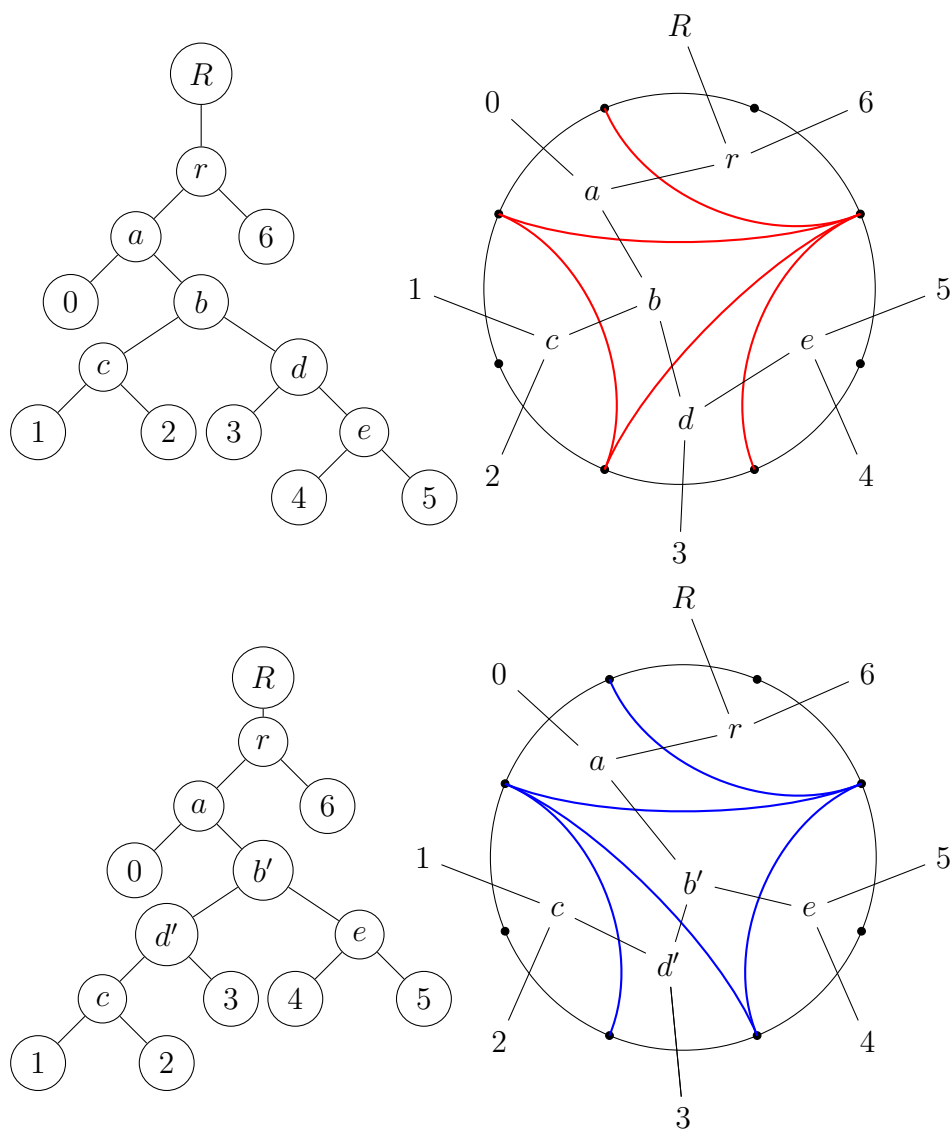

 \includegraphics[width=\textwidth]{tree1.tikz}
  \includegraphics[width=\textwidth]{tree2.tikz}
 
    \caption{The rooted binary tree with encoding 1101100101000 on the top left and on the top right its corresponding triangulation. This tree differs by a single rotation (left at node $b$) from the 
     tree 1101110001000 on the lower left and with triangulation on the lower right. The triangulation differs by a single edge flip, flipping the edge between regions $b$ and $d$ from top to bottom.
    A right rotation at the node labelled $b'$ in the lower left tree transforms back to the top left tree, and flipping the edge between $b'$ and $d'$ will transforms to the upper right triangulation.
    }

    \label{fig:exampletree1}

\end{figure}
  
 There are no known polynomial-time algorithms to find shortest paths in associahedra graphs or to find distances between vertices in those graphs.  There are a number of approximation algorithms \cite{barilpallo, clearylinear} for rotation distance.  Two edges are said to be {\em conflicting } if it is not possible for them to be part of the same triangulation, due to the two edges crossing.   This is the ordered version of having two edges be in conflict in trees without an order on leaves, such as those arising in phylogeny.  Edges that are not in conflict are said to be {\em compatible}, and two trees that have a large number of compatible edges are considered reasonably close.  See Semple and Steel \cite{charlesmike} for discussion of combinatorial and algorithmic questions about counting the number of compatible edges (or the number of edge conflicts)  between phylogenetic trees, including estimates of distance from increasing the number of compatible edges, which is equivalent to reducing the number of conflicting edges.
 
 Here, we investigate connections between edge conflicts and finding geodesics (minimal length paths) in the setting of ordered trees, or equivalently of triangulations of polygons.  Counting conflicts gives a crude estimate of distance in that 
 two triangulations that have many  conflicting edge pairs are generally not close together, and triangulations that have few conflicts are typically closer together.  The number of expected conflicts between triangulations selected uniformly at random has been analyzed experimentally by Chu and Cleary \cite{conflicts} and appears to grow slightly more than linearly with size.    Here, we describe example triangulation pairs  of size 8 and larger where there is no geodesic along which the total number of conflicts between the triangulations uniformly decreases or even remains the same.  This rules out the potential success of various greedy algorithms to reduce conflicts to find shortest paths or distances in these associahedra graphs.  In Section \ref{secbig}, we show that the required increase of conflicts along a geodesic can grow to any specified increase $k$ for triangulation pairs of size $k+7$ and larger.
 
 The authors are grateful to the anonymous reviewers for numerous helpful suggestions for clarity and correctness, as well as suggesting an approach for more generality.

\begin{figure}
 \includegraphics[width=\textwidth]{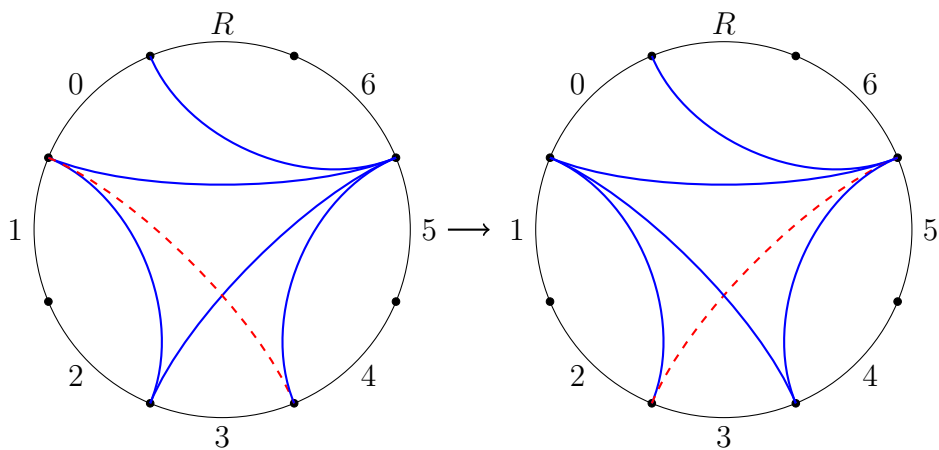}

    \caption{The edge flip which transforms the left triangulation of Figure \ref{fig:exampletree1} to that of the right one of Figure \ref{fig:exampletree1} in blue, with the red dashes edges indicating the edges involved in the change.} 
    \label{fig:flip}

\end{figure}

\section{Background and definitions}

We consider the marked regular $(n+2)$-gon $P$ with edges labelled as $R$ for a marked root edge and then consecutively numbered counter-clockwise from $0$ to $n$. A {\em triangulation $T$} of $P$ is a collection of $n-1$ non-crossing edges from vertices of $P$ that separate $P$ into $n$ triangles.  An {\em edge flip on $T$} is the process of taking two triangles that share an interior edge, thus forming a quadrilateral $Q$, and replacing the interior diagonal of $Q$ that lies in $T$ with the other diagonal of $Q$ to form a new triangulation $T'$.  For each $n$, the relevant {\em associahedron graph} is  the graph whose vertices are triangulations of the regular $(n+2)$-gon and whose edges connect vertices that differ by a single edge flip.  The {\em edge flip distance} from a triangulation $S$ to a triangulation $T$ of the same size is the minimal path length in the associahedron graph.  Alternatively, we can regard the vertices  of the size $n$ associahedron graph as rooted binary trees of size $n$ interior nodes with $n+1$  leaves numbered from $0$ to $n$, with the edges connecting vertices whose trees differ by a single rotation (left or right) at an interior node.  To any triangulation, there is a well-known natural dual construction giving rise to a description involving rooted trees illustrated in Figures \ref{fig:exampletree1} and \ref{fig:flip}. For further background see, for example, Stanley \cite{stanley1}.  Edge flip distance between triangulations corresponds exactly to {\em rotation distance} between the dual binary trees.

By {\em binary tree} in the following, we mean a rooted tree where every node has either 0 children (and is thus a {\em leaf}) or 2 children (and is thus an {\em interior node}) with an order on the children, resulting in a left-to-right order on the nodes and leaves.
Rotation distance was described by Culik and Wood \cite{cw} whose arguments showed that the one-skeleta of associahedra are connected and gave an upper bound of $2n-2$ on the distance between any two vertices of the associahedron  graph of size $n$.  Remarkable work of Sleator, Tarjan and Thurston \cite{stt} showed that a better upper bound for distance between vertices for $n \geq 11$ is $2n-6$, and furthermore that upper bound is realized for all $n$ larger than some very large $N$.  Recent work of Pournin \cite{pournin} showed that in fact the upper bound is realized for all $n \geq 11$.   There are no known polynomial-type algorithms to compute rotation distance, although rotation distance has been shown to be fixed parameter tractable by Cleary and St.~John \cite{rotfpt} and there are a number of approximation algorithms \cite{barilpallo, clearylinear}.  A {\em geodesic} between two trees is a minimal length path between the trees, whose length is necessarily the rotation distance between the endpoints.

We describe trees via the binary sequence obtained by a pre-order traversal of all nodes of the tree, recording a 1 for each internal node and 0 for each leaf node, giving a binary sequence of $n$ 1's and $n+1$ 0's via this encoding.  So for example the balanced tree with four leaves is encoded as 1100100.   In the figures, we draw triangulations as collections of chords in the hyperbolic disk with the boundary as an $(n+2)$-gon, with the intervals numbered counterclockwise from $0$ to $n$ and the root interval either labelled $R$ or unlabelled, at the top.  The counterclockwise end of the interval labeled $i$ is referred to as vertex $i$ when needed.

Two edges $s$ and $t$ are said to be {\em conflicting} if it is not possible for them to be present in the same triangulation; equivalently, if the edges cross as chords connecting vertices in the relevant polygon. 
Two edges that are not in conflict are said to be {\em compatible}, a term used widely in phylogenetic settings, see Semple and Steel \cite{charlesmike}. For example, a chord from vertex 5 to 8 is in conflict with a chord from vertex 3 to 6, which can be seen as one of the intersections of the red chords and blue chords in Figure \ref{sandt} if we number the vertices as the counterclockwise ends of the numbered edges. The number of conflicts for a pair of triangulations $S$ and $T$ is the sum of the conflicts between the pairs of edges.  We note that the total number of conflicts between $S$ and $T$ is an estimate of the distance between $S$ and $T$ only weakly.  In particular, the number of conflicts between $S$ and $T$ is a semi-metric in that it is symmetric and positive definite and satisfies that if the total number of conflicts is zero, the triangulations must coincide.  However, it is not a metric in that it does not satisfy the triangle inequality, as can be seen by considering two triangulations  $A$ and $B$ of the pentagon that are not adjacent in the associahedron graph.  They have a common neighbor $C$ with which $A$ and $B$ each have one conflict, but the number of conflicts from $A$ to $B$ is three.

\section{Conflicts along geodesics \label{secexample}}

There are a range of conflict-based greedy algorithms to reduce conflicts to try to find a geodesic from a given triangulation $S$ to a given target triangulation $T$.  Generally, they proceed in a manner under the following approach:

\begin{enumerate}
\item Set $S=S_0$ to begin
\item If $S_i$ is $T$, then we are done and the distance is no more than $i$.
\item If $S_i$ is not the triangulation $T$, then we calculate the conflicts between all of the neighbors of $S_i$ and the target $T$.  Among those neighbors, we let $S_{i+1}$ be a triangulation with minimal conflicts, and then we repeat.
\end{enumerate}

The simplest version is merely to always take the neighbor with minimal conflicts that is lexicographically least.   There are other more sophisticated approaches for choosing among ties between neighbors using some type of greedy approach but we will see below that any algorithm of this type is guaranteed to give an overestimate in some cases.

We note that this algorithm will produce a path from $S$ to $T$ since unless the triangulations coincide, there is always a neighbor with fewer conflicts with the target triangulation (see Hanke, Ottomann, and Schuierer \cite{hankeottomannschuierer} for a proof of this in the more general case of triangulations of point sets.)  Thus the algorithm will always give a path from $S$ to $T$, and this results in an upper bound for the edge-flip distance.  However, this path is not necessarily a geodesic path so the distance from $S$ to $T$ may be less than various greedy conflict-based algorithms may find.
 There are multiple ways in which such a greedy algorithm to reduce conflicts can fail to find a geodesic.   It may be that there are several choices among the neighbors of $S_i$ with the same number of conflicts, and at least one of them does not begin a geodesic path from $S_i$ to $T$. Or it may be that none of the neighbors with minimal conflicts begins a geodesic path.  Broadening the notion to allow choices where the number of conflicts is not necessarily minimal but merely equal or smaller than the current number of conflicts gives many more possibilities.  But even algorithms that consider reducing conflicts (not necessarily minimally) or keeping conflicts constant will not always find a geodesic path because of examples of the following type:
 \begin{figure} 
\centering
\begin{tabular}{cc}
	\subfloat[$S$]{\label{fig:1}\includegraphics[scale=\scaleSizeA]{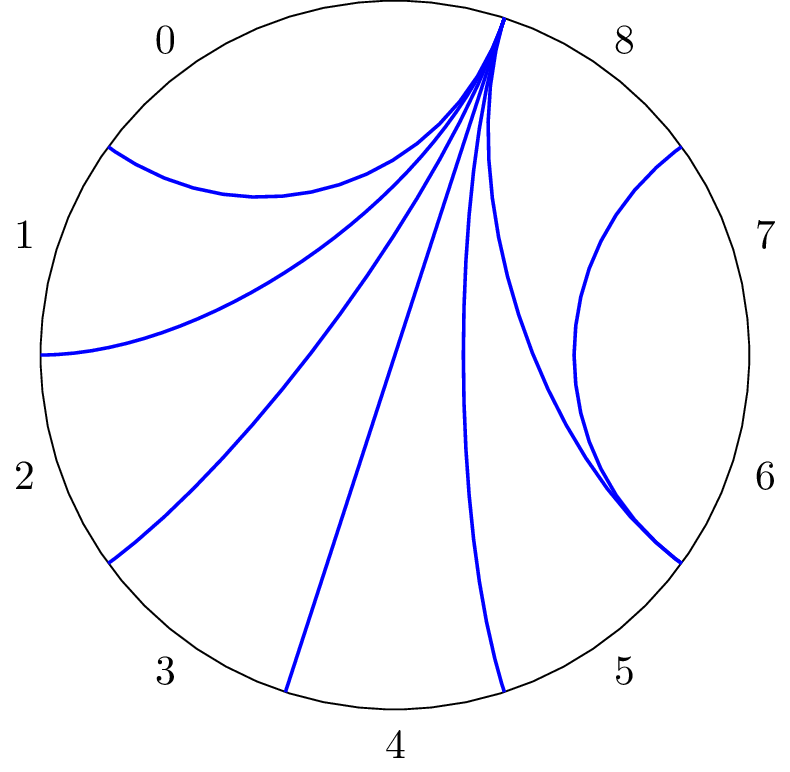}} &
	\subfloat[$T$]{\label{fig:2}\includegraphics[scale=\scaleSizeA]{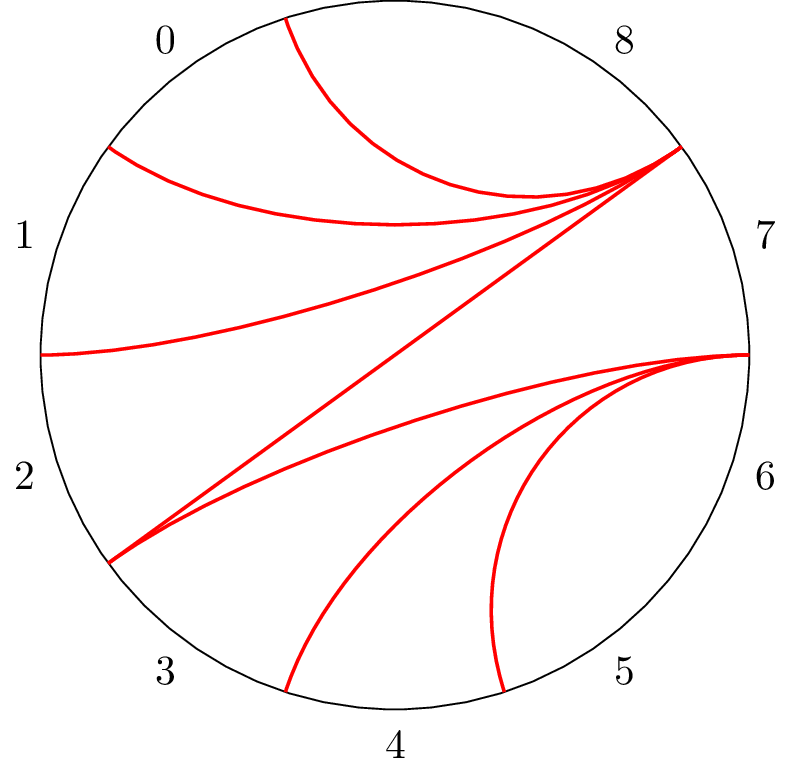}}
\end{tabular}
\caption{ \label{sandtsep} The pair of triangulations $S$ and $T$ used to prove Theorem 1.}

\end{figure}
 
\begin{theorem}
For any $n$ at least 8, there are examples of triangulation pairs $(S,T)$ of size $n$ where every geodesic $\gamma$ from $S$ to $T$ with $\gamma = \{S=S_0, S_1, S_2, \ldots, S_k=T \}$  has the property that $S_1$ has more conflicts with $T$ than $S$ has with $T$.
\end{theorem}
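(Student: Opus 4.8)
The plan is to fix the explicit pair $(S,T)$ of Figure~\ref{sandtsep} for the base case $n=8$ and to reduce the theorem to a finite, checkable analysis of the neighbors of $S$. Writing $c(\cdot,\cdot)$ for the conflict count and $d(\cdot,\cdot)$ for edge-flip distance, I would first read off $c_0 := c(S,T)$ directly from the chord-crossing picture, counting the crossings of the diagonals of $S$ against those of $T$. Next I would pin down the distance $k := d(S,T)$ by a two-sided argument: an explicit flip sequence realizing an upper bound $d(S,T)\le k$, together with a matching lower bound $d(S,T)\ge k$. A triangulation of the $(n+2)$-gon has exactly $n-1$ diagonals, so $S$ has $n-1$ neighbors in the associahedron graph, each obtained by flipping one diagonal; for $n=8$ this is seven neighbors, a finite list.

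The heart of the argument is a dichotomy over these neighbors. For a neighbor $S_1$, note that $S_1$ begins some geodesic from $S$ to $T$ exactly when $d(S_1,T)=k-1$. I would therefore compute, for each of the $n-1$ neighbors, both its conflict count $c(S_1,T)$ (cheap, by counting crossings against $T$) and whether it decreases the distance, i.e. whether $d(S_1,T)=k-1$. The theorem follows once I show that the set of neighbors with $c(S_1,T)\le c_0$ is disjoint from the set with $d(S_1,T)=k-1$: every conflict-nonincreasing flip keeps the distance at $\ge k$, while every distance-reducing flip strictly increases the conflict count past $c_0$. In other words, the only way to start a geodesic is to first make things look worse in the conflict count.

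The main obstacle is certifying the distance lower bounds, since the conflict count is only a semi-metric and badly underestimates $d$, so it cannot by itself rule out a short route. To show $d(S_1,T)\ge k$ for the conflict-nonincreasing neighbors (and $d(S,T)\ge k$) I would use a genuine flip-distance lower bound rather than the conflict count: for instance, track a diagonal of $T$ that crosses several chords of the current triangulation and argue that installing it forces a prescribed number of intermediate flips, or, exploiting the small size, rely on a finite verified search over all flip sequences of length less than $k$. Ruling out a clever short path for each conflict-nonincreasing neighbor is exactly the step that makes greedy conflict-reduction fail, and it is where the construction of $S$ and $T$ must be chosen carefully so that the geodesic-initiating flips are precisely the conflict-increasing ones.

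Finally, to obtain the statement for all $n>8$ I would embed the base example in a larger polygon by triangulating the $n-8$ extra cells identically in $S$ and in $T$, for example as a fan sharing a single diagonal that separates the added region from the core. These shared diagonals are compatible with every core chord, so they contribute no new conflicts, and any flip performed inside the fan region moves away from $T$ without lowering the conflict count; hence no conflict-nonincreasing first step can arise there. The first-step analysis thus reduces verbatim to the core, and the property that every geodesic has $c(S_1,T)>c(S,T)$ is inherited for every $n\ge 8$.
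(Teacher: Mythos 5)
Your proposal follows essentially the same route as the paper's proof: fix the same size-8 pair, certify $d(S,T)=8$ and the conflict counts of the seven neighbors by a finite (computer-assisted) verification, observe that the unique distance-reducing neighbor is precisely the conflict-increasing one, and pad with identically triangulated regions to reach every $n>8$. The one point to tighten is the padding step: your claim that fan-region flips ``move away from $T$'' and that the first-step analysis reduces verbatim to the core is exactly the common-edge lemma of Sleator, Tarjan and Thurston \cite{stt} (geodesics never flip edges common to both triangulations), which the paper invokes explicitly and your argument should cite rather than assert.
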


\begin{proof} We consider a pair of triangulations $S$ and $T$, each with 7 chords dividing the  10-gon into 8 triangles.  Their dual trees correspond to trees with 8 internal nodes and 9 leaves.  The triangulation $S$ has encoding $10101010101011000$ for its dual tree and the dual tree for  $T$ has encoding $11010101101010000$. The triangulations $S$ and $T$ are shown in Figure \ref{sandtsep}.  The distance in the associahedron graph is 8, obtained by a breadth-first enumeration of neighborhoods of increasing size of $S$ until reaching $T$.  The number of conflicts between $S$ and $T$ is 27 total conflicts, as shown in Figure \ref{sandt} where the triangulations are superimposed.   There are 7 neighbors to $S$, each corresponding to flipping one of the seven edges.  The number of conflicts of those neighbors of S with T are 27, 26, 26, 23, 22, 22 and 28.  The only neighbor of $S$ that is closer to $T$ than $S$ is the last one, that has more conflicts than the original $S$.  Thus any geodesic from $S$ to $T$ will necessarily have the conflicts rise from 27 to 28.   Note that we can add identical additional triangles to each of the triangulations in the pair to create examples of this type for any size at least 8, since these additional triangles will not be disturbed along any geodesic by Lemma 3b of Sleator, Tarjan and Thurston \cite{stt}, giving essentially the same geodesic and conflicts along the geodesic.  Note further that though this proof is computer-assisted in analyzing geodesics by exhaustion, this can also be proved directly using the methods described in the proof of Theorem \ref{thmbiggap} \end{proof}

Thus, any algorithm that attempts to find geodesics by either minimizing conflicts, at least reducing conflicts, or keeping conflicts at least non-increasing will not find any geodesic from $S$ to $T$.

We note a few properties of this particular example.  Though it is quite common to have a multitude of geodesics between a pair of triangulations, computation shows that in this case there is a unique geodesic $\gamma$ from $S$ to $T$.  This unique geodesic proceeds through triangulations listed in Table \ref{gammatab} which are pictured in Figure \ref{gamma}.

\begin{figure}
\includegraphics[scale=\scaleSizeA]{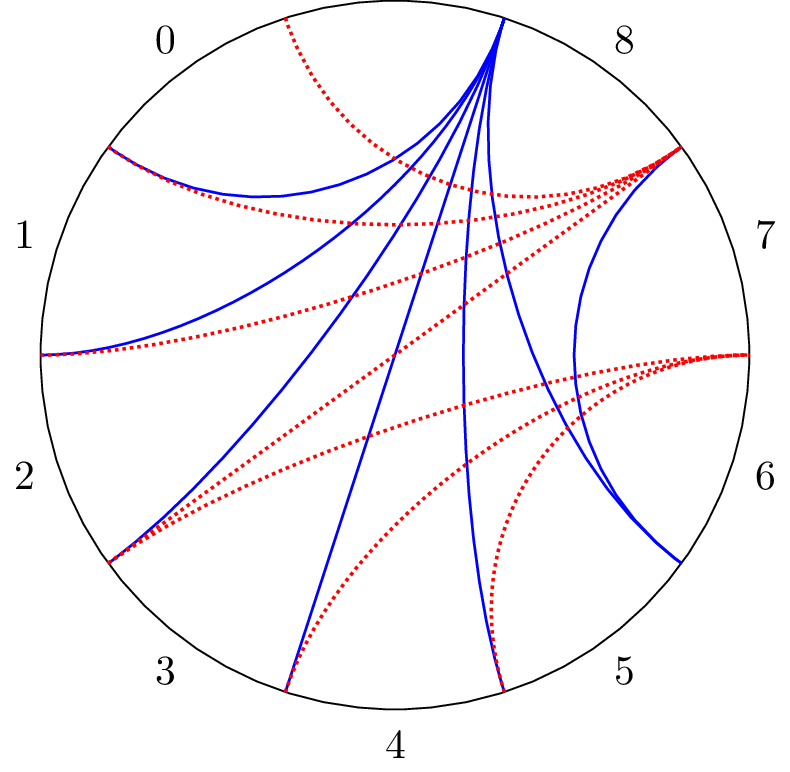}
\caption{\label{sandt} The triangulations $S$ and $T$ superimposed, with $S$ in blue and $T$ in dotted red.  There are 27 conflicts in the pair $(S,T)$ which can be seen as the 27 intersections between the triangulations.}

\end{figure}

\begin{table}
\begin{tabular}{|c|c|} \hline
Triangulation & Conflicts with $T$  \\ \hline

$S=S_0$ & 27 \\ \hline
$S_1= 10101010101010100$ & 28 \\ \hline
$S_2= 10101010101100100$ & 21 \\ \hline
$S_3= 10101010110100100$ & 15 \\ \hline
$S_4= 10101011010100100$ & 10 \\ \hline
$S_5= 10101011101010000$ & 6 \\ \hline
$S_6= 10101101101010000$ & 3 \\ \hline
$S_7= 10110101101010000$ & 1 \\ \hline
$T$ & 0 \\ \hline
\end{tabular}  
\caption{The triangulations in the unique geodesic $\gamma$ from $S$ to $T$ and the conflicts with the target triangulation $T$ along this path of length 8}
\label{gammatab}
\end{table}

\begin{figure}
\centering
\begin{tabular}{ccc}
	\subfloat[$S$]{\includegraphics[scale=\scaleSizeB]{SandTnhl.png}} &
	\subfloat[$S_1$]{\includegraphics[scale=\scaleSizeB]{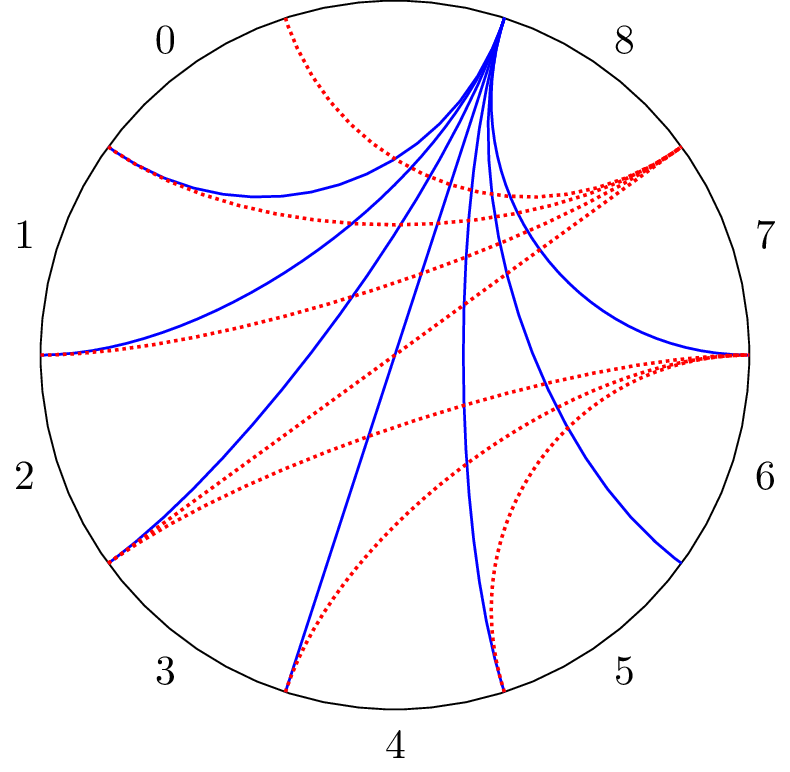}} &
	\subfloat[$S_2$]{\includegraphics[scale=\scaleSizeB]{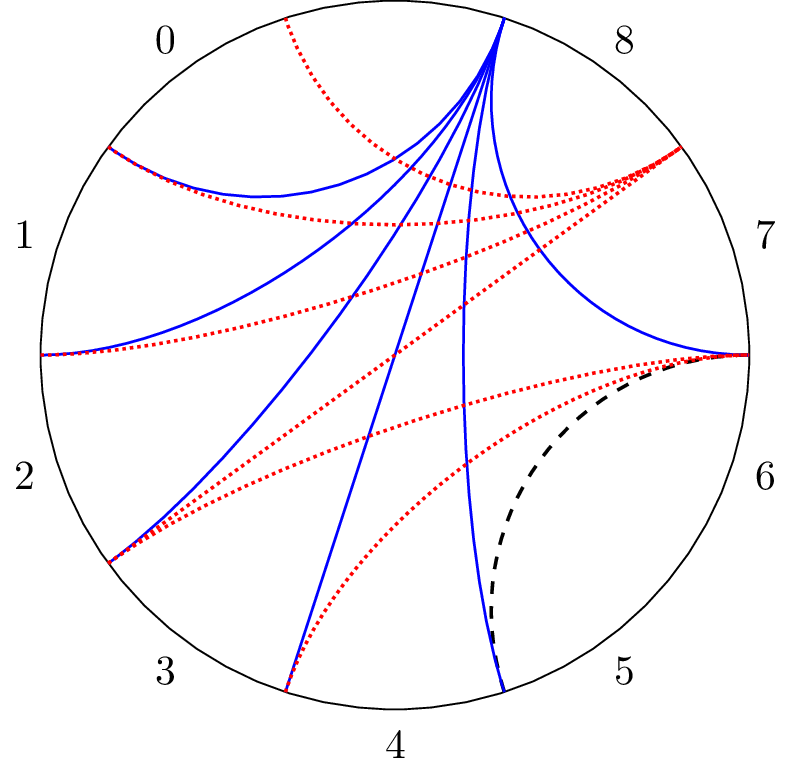}} \\
	\subfloat[$S_3$]{\includegraphics[scale=\scaleSizeB]{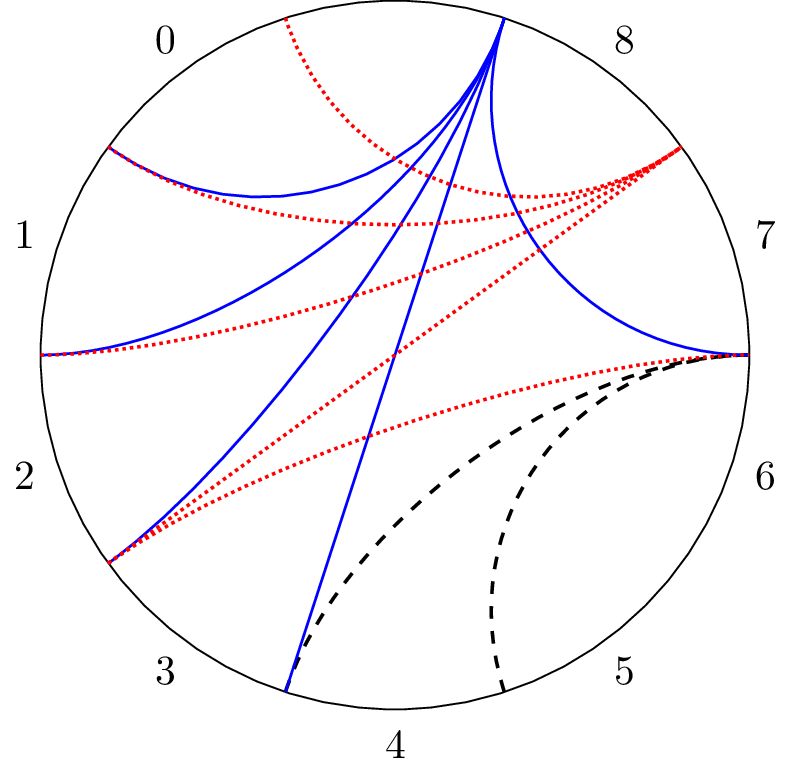}} &
	\subfloat[$S_4$]{\includegraphics[scale=\scaleSizeB]{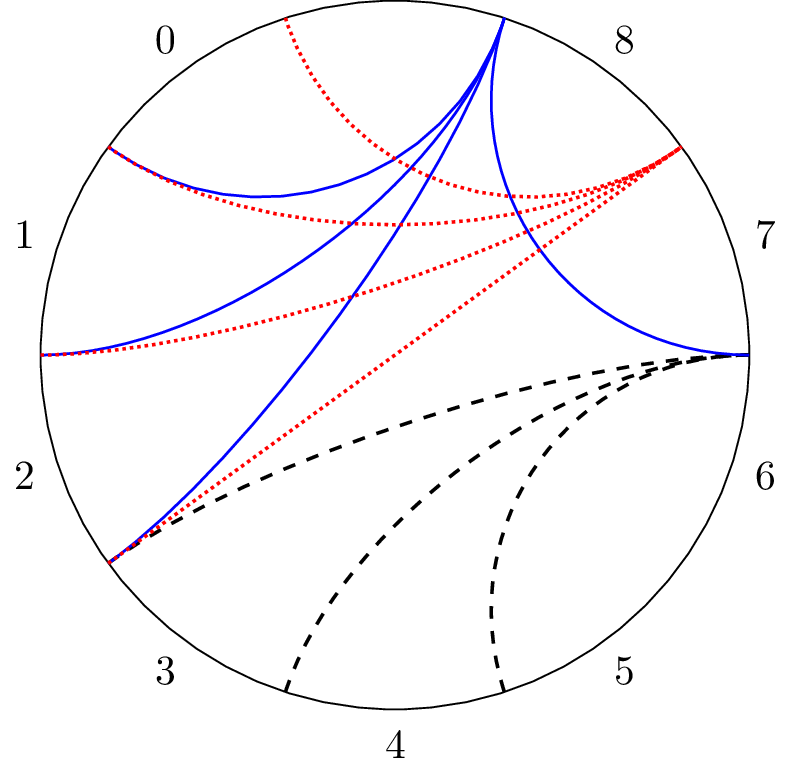}} &
	\subfloat[$S_5$]{\includegraphics[scale=\scaleSizeB]{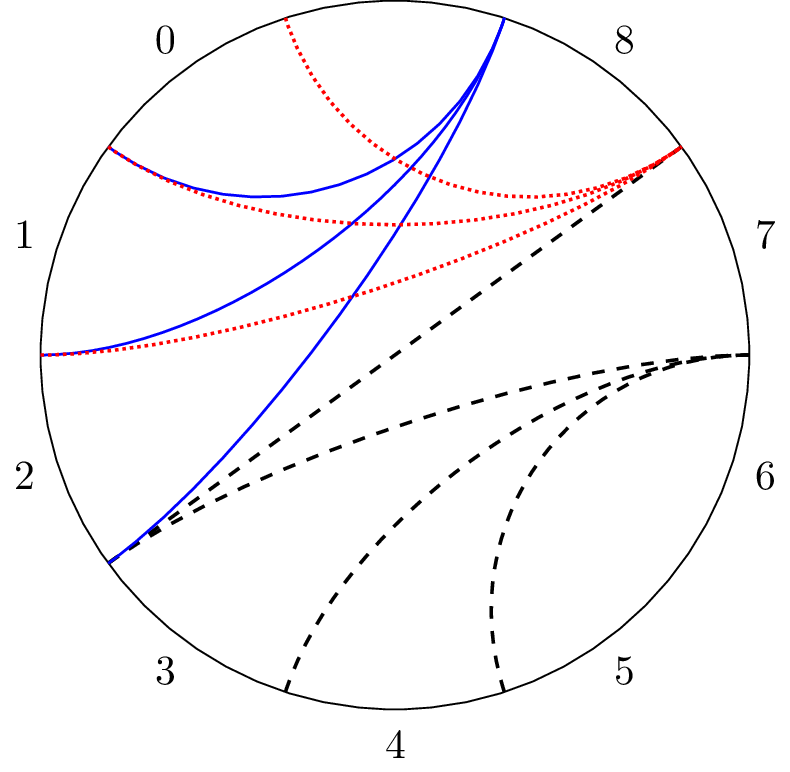}} \\
	\subfloat[$S_6$]{\includegraphics[scale=\scaleSizeB]{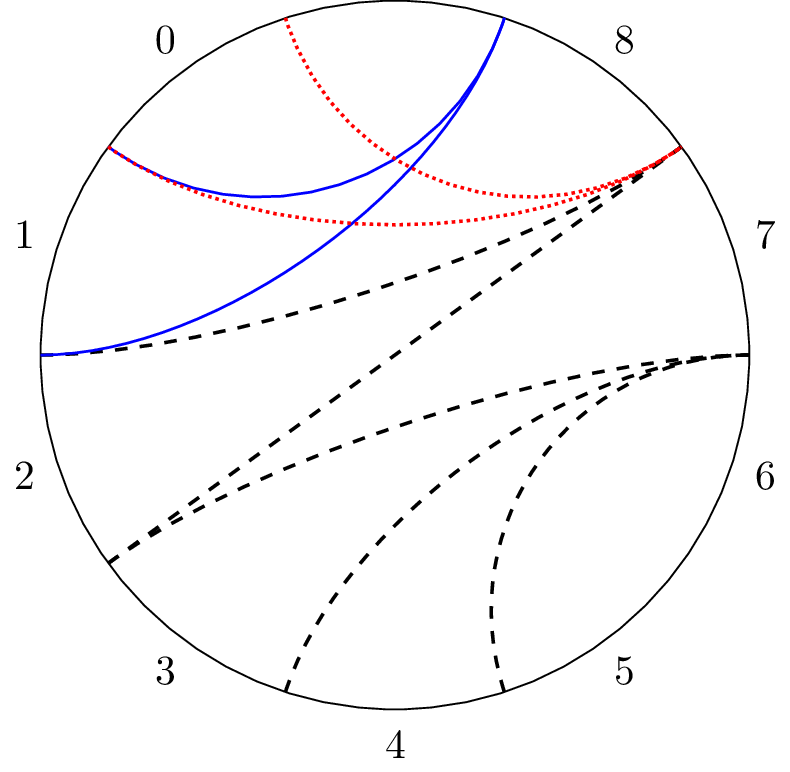}} &
	\subfloat[$S_7$]{\includegraphics[scale=\scaleSizeB]{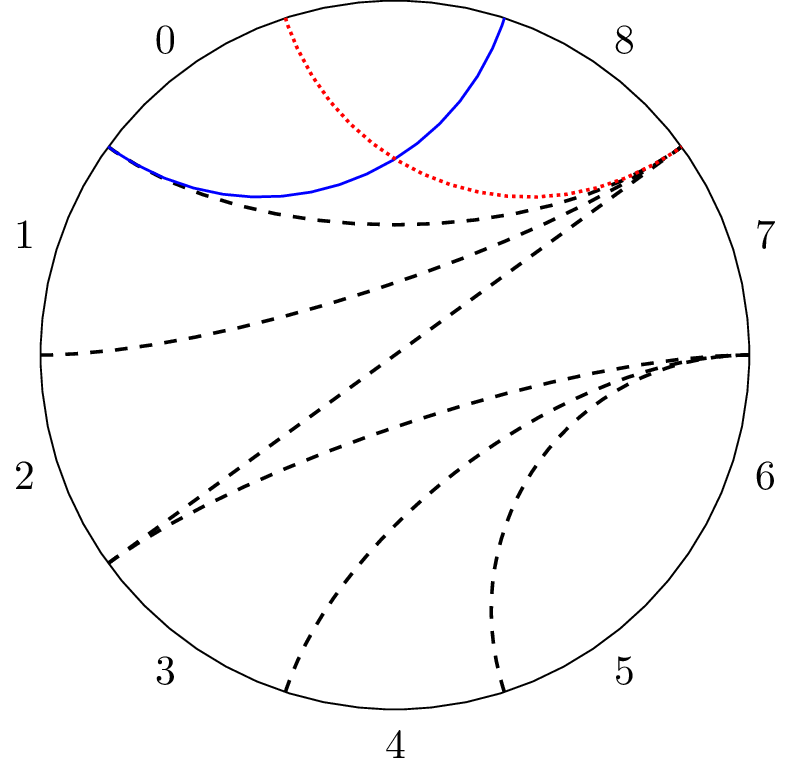}} &
	\subfloat[$T$]{\includegraphics[scale=\scaleSizeB]{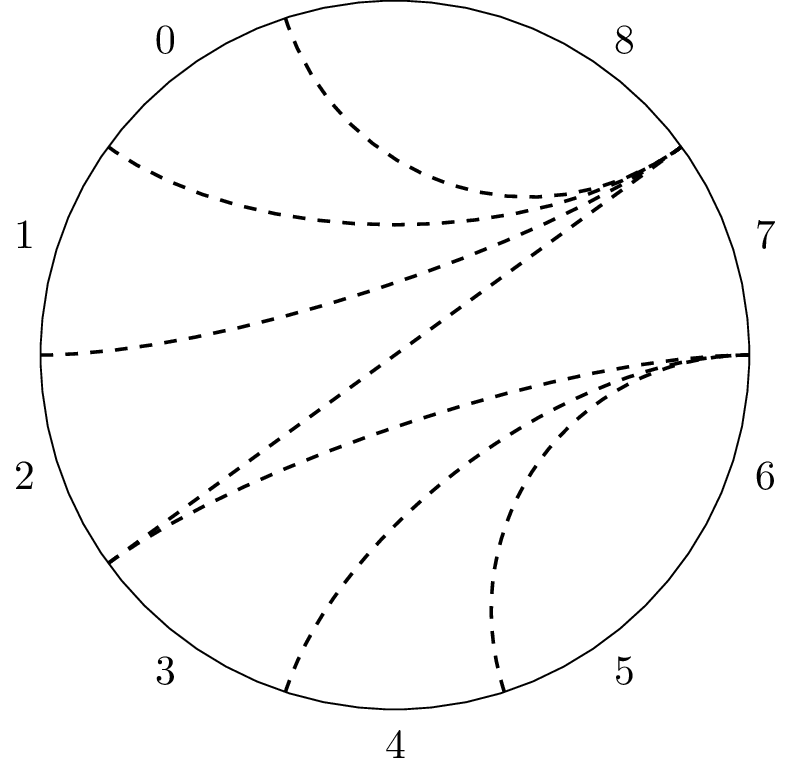}}
	 
\end{tabular}
\caption{The triangulations in the geodesic $\gamma$ from $S$ to $T$ superimposed on the target $T$.  The triangulation $S$ is in solid blue and $T$ in dotted red, and common edges are shown as dashed black.}
\label{gamma}
\end{figure}

 This geodesic $\gamma$ begins with a first move that actually has  the maximum number of conflicts among all neighbors of $S$.  The greedy algorithm, taking the lexicographically first minimal conflict neighbor in each case, gives a path of length 9, whereas the minimal length path is of length 8, thus giving an over-estimate by 1.

The example pairs $S$ and $T$ given above are one of 40 equivalence classes (up to rotational and reflectional symmetry of the triangulations) of examples of size 8 where the initial geodesic step must increase the number of conflicts, with representatives of all equivalence classes listed in Table \ref{bad8}. 

This particular example is not symmetric.  We note that if we proceed instead from $T$ toward $S$, the neighbors of $T$ have conflicts 21, 23, 24, 29, 25, 25, and 26 with $S$, with the only neighbor of $T$ that is closer to $S$ having 21 conflicts, the smallest of all choices with respect to conflicts.   And, in fact, the greedy algorithm that proceeds from the $T$ toward $S$ in steps by always taking the lexicographically least neighbor with the fewest conflicts with $S$ does find a geodesic path of length 8.   However, we can construct triangulation pair examples that have the property of conflicts rising along all geodesics from both ends.

\begin{theorem} \label{biconflict}
There are examples of triangulation pairs $(U,V)$ where for all geodesic paths  $\gamma$ from $U$ to $V$ or from $V$ to $U$ there is a step where the number of conflicts increases along $\gamma$.
\end{theorem}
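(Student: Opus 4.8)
The plan is to construct $(U,V)$ by gluing the pair $(S,T)$ of Theorem~1 to a role-reversed copy of itself along a shared edge, and then to read off a forced increase in each direction from the way geodesics decompose across that shared edge. Concretely, I would let $e$ be an edge splitting a larger polygon into two sub-polygons $P_A$ and $P_B$, each a $10$-gon having $e$ as one of its sides; the union is then an $18$-gon, so the example has size $16$. Define $U$ to restrict to (a copy of) $S$ on $P_A$ and to (a copy of) $T$ on $P_B$, and define $V$ to restrict to $T$ on $P_A$ and to $S$ on $P_B$. In both $U$ and $V$ the edge $e$ is present, so it is a common edge of the pair, and the asymmetry of the Theorem~1 example is handled by inserting it ``forwards'' on one side and ``backwards'' on the other.

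The key structural input is the decomposition property of Sleator, Tarjan and Thurston \cite{stt} already invoked in the proof of Theorem~1: a common edge is never flipped along any geodesic, and the moves of a geodesic that lie in $P_A$ and those that lie in $P_B$, read in order, form geodesics in the two sub-polygons independently. Since a chord contained in $P_A$ and a chord contained in $P_B$ can meet only along $e$, i.e.\ only at shared endpoints, no edge of the $A$-side ever conflicts with an edge of the $B$-side. Hence at every vertex along any geodesic the total number of conflicts with the target is exactly the sum of the conflicts contributed by $P_A$ and by $P_B$, and a single flip changes only the contribution of the side on which it acts.

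With this in hand the two directions are symmetric. Along any geodesic from $U$ to $V$ the $A$-moves form a geodesic from $S$ to $T$ in $P_A$, and since $S \neq T$ at least one such move occurs; I would consider the first of them. Just before it the $A$-side is $S$, and by Theorem~1 the first step of \emph{every} geodesic from $S$ to $T$ strictly increases the number of conflicts with $T$ (from $27$ to $28$). Because this combined step is an $A$-move, the $B$-contribution is unchanged, so the total conflict count strictly increases at exactly that step. Running the identical argument on the $B$-side for a geodesic from $V$ to $U$ --- where the $B$-moves now form a geodesic from $S$ to $T$ --- produces a forced increase there as well. Thus every geodesic in either direction has a step at which conflicts increase.

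The main thing to pin down is the decomposition property in the precise form used: that \emph{every} geodesic (not merely some geodesic) preserves the common edge $e$ and projects to honest geodesics on each side, so that the first $A$-move really is the first step of a geodesic from $S$ to $T$ to which the conclusion of Theorem~1 applies. This is exactly the statement underlying the ``not disturbed along any geodesic'' remark in the proof of Theorem~1, so it is available here; the remaining points --- that cross-side chords cannot conflict, and that a first $A$-move (respectively $B$-move) exists because the two sides genuinely differ --- are immediate. I would also note that smaller examples exhibiting the two-sided behavior directly can be located by the same exhaustive neighborhood search used for Theorem~1, but the gluing argument has the advantage of requiring no new computation.
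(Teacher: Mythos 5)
Your proposal is correct and takes essentially the same approach as the paper: the paper likewise concatenates the pair $(S,T)$ of Theorem~1 with its reversed copy $(T,S)$ along a peripheral edge to obtain an $18$-gon pair, and invokes Sleator--Tarjan--Thurston to conclude that the common edge is never flipped so the Theorem~1 obstruction applies on the appropriate side in each direction. The paper's version is terser --- it leaves the side-wise geodesic decomposition and the additivity of conflict counts across the common edge implicit --- but these are exactly the details you spell out, not a different argument.
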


\begin{proof}

We concatenate the triangulation pair above  $(S,T)$ with its opposite pair $(T,S)$ along any of the peripheral edges to obtain a pair of triangulations $(U,V)$ of the 18-gon.  The peripheral edge used in the concatenation is now a common edge and does not flip during any geodesic path by Lemma 3b of Sleator, Tarjan and Thurston \cite{stt}.   Thus by the arguments above, for any geodesic $\gamma$ from $U$ to $V$ or from $V$ to $U$, there must be at least one point along $\gamma$ where the total number of conflicts must increase.
\end{proof}

We note that there are smaller example pairs than the ones obtained by doubling used in the proof of Theorem \ref{biconflict}.  For example, the triangulations in the pair of Figure \ref{biup9} are of size 9 and have the property that every geodesic from $S$ to $T$ or from $T$ to $S$ begins with an increase in conflicts.

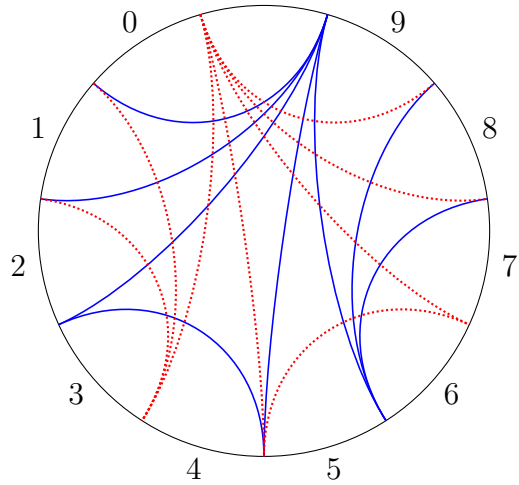
\begin{figure}
\begin{tikzpicture}[scale=3]

    \tikzstyle{single conflict node}=
        [draw=yellow!30!black, fill=yellow, opacity=0.8,
         shape=circle, scale=0.4]
    \tikzstyle{multiple conflict node}=
        [draw=orange!40!black, fill=orange, shape=circle, scale=0.6]
	\tikzset{chord of S/.style={blue, semithick}}
	\tikzset{chord of T/.style={red, thick, densely dotted}}
	\tikzset{common chord/.style={black, thick, dashed}}

    \draw (0,0) circle [radius=1];
	\node(0) at (-0.5947,0.9254) {$0$};
	\node(1) at (-1.0006,0.4570) {$1$};
	\node(2) at (-1.0888,-0.1565) {$2$};
	\node(3) at (-0.8313,-0.7203) {$3$};
	\node(4) at (-0.3099,-1.0554) {$4$};
	\node(5) at (0.3099,-1.0554) {$5$};
	\node(6) at (0.8313,-0.7203) {$6$};
	\node(7) at (1.0888,-0.1565) {$7$};
	\node(8) at (1.0006,0.4570) {$8$};
	\node(9) at (0.5947,0.9254) {$9$};

	\coordinate(v0) at (-0.2817,0.9595) {};
	\coordinate(v1) at (-0.7557,0.6549) {};
	\coordinate(v2) at (-0.9898,0.1423) {};
	\coordinate(v3) at (-0.9096,-0.4154) {};
	\coordinate(v4) at (-0.5406,-0.8413) {};
	\coordinate(v5) at (-0.0000,-1.0000) {};
	\coordinate(v6) at (0.5406,-0.8413) {};
	\coordinate(v7) at (0.9096,-0.4154) {};
	\coordinate(v8) at (0.9898,0.1423) {};
	\coordinate(v9) at (0.7557,0.6549) {};
	\coordinate(v10) at (0.2817,0.9595) {};
	\draw[chord of S]
		(v1) .. controls (-0.3779,0.3274) and (0.1409,0.4797) .. (v10);
	\draw[chord of S]
		(v2) .. controls (-0.4949,0.0712) and (0.1409,0.4797) .. (v10);
	\draw[chord of S]
		(v3) .. controls (-0.4548,-0.2077) and (0.1409,0.4797) .. (v10);
	\draw[chord of S]
		(v3) .. controls (-0.4548,-0.2077) and (-0.0000,-0.5000) .. (v5);
	\draw[chord of S]
		(v5) .. controls (-0.0000,-0.5000) and (0.1409,0.4797) .. (v10);
	\draw[chord of S]
		(v6) .. controls (0.2703,-0.4206) and (0.1409,0.4797) .. (v10);
	\draw[chord of S]
		(v6) .. controls (0.2703,-0.4206) and (0.3779,0.3274) .. (v9);
	\draw[chord of S]
		(v6) .. controls (0.2703,-0.4206) and (0.4949,0.0712) .. (v8);
	\draw[chord of T]
		(v0) .. controls (-0.1409,0.4797) and (0.3779,0.3274) .. (v9);
	\draw[chord of T]
		(v0) .. controls (-0.1409,0.4797) and (0.4949,0.0712) .. (v8);
	\draw[chord of T]
		(v0) .. controls (-0.1409,0.4797) and (0.4548,-0.2077) .. (v7);
	\draw[chord of T]
		(v0) .. controls (-0.1409,0.4797) and (-0.0000,-0.5000) .. (v5);
	\draw[chord of T]
		(v0) .. controls (-0.1409,0.4797) and (-0.2703,-0.4206) .. (v4);
	\draw[chord of T]
		(v5) .. controls (-0.0000,-0.5000) and (0.4548,-0.2077) .. (v7);
	\draw[chord of T]
		(v1) .. controls (-0.3779,0.3274) and (-0.2703,-0.4206) .. (v4);
	\draw[chord of T]
		(v2) .. controls (-0.4949,0.0712) and (-0.2703,-0.4206) .. (v4);

\end{tikzpicture}
\caption{Two triangulations superimposed, with $S$ in blue having associated tree with encoding $1010101100101110000$ and $T$ in dotted red having tree with encoding  $1111110101000100000$.  There are 33 conflicts between $S$ and $T$ and the unique first geodesic step from $S$ to $T$ flips the edge from 2 to 4 to an edge from  3 to 9 giving 34 conflicts between the resulting triangulations. Similarly, the unique first geodesic step from $T$ to $S$ flips the edge from 4 to 6 to an edge from  5 to the root vertex giving 34 conflicts between the resulting triangulations. \label{biup9}}
\end{figure}

\section{Greater rises in conflicts \label{secbig}}

The particular example triangulation pair of Figure \ref{sandtsep} from Section \ref{secexample} shows that conflicts can necessarily rise along geodesics by one.  In fact, for increasingly large triangulation pairs, conflicts along geodesics can rise by any specified amount.  We use  methods similar to  analyses in Cleary, Rechnitzer, and Wong \cite{commonedges} and Cleary and St.~John \cite{clearylinear} to bound distances sharply.   A {\em one-off} edge in $S$ in a triangulation pair $(S,T)$ is an edge that is not common to $S$ and $T$ but that flips directly to an edge in $T$.  By Lemma 3a of \cite{stt}, for any one-off edge, there is a geodesic from $S$ to $T$ that begins with that edge flip.  A triangulation pair of size $n$ (with $n-1$ edges) with no common edges and no one-off edges must have distance at least $n$ as each edge flip can create at most one new common edge, and if there are no one-off edges present then it will take at least $n$ steps to transform $S$ to $T$.  With this we can show that conflicts can rise along a geodesic by any specified amount.

\begin{theorem} \label{thmbiggap}
For any positive $k$, there are examples of triangulation pairs $(S,T)$ of size $n \geq k+7$ where every geodesic $\gamma$ from $S$ to $T$ with $\gamma = \{S=S_0, S_1, S_2, \ldots, S_k=T \}$  has the property that $S_1$ has $k$ more conflicts with $T$ than $S$ has with $T$.  \end{theorem}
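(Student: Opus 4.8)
The plan is to build the examples by concatenating scaled copies of the size-8 gadget from Theorem 1, exploiting the fact that conflicts add up across independent regions while geodesic steps in disjoint regions do not interfere. Recall that the size-8 pair $(S,T)$ of Theorem 1 has the property that its unique first geodesic step forces the conflict count to rise by exactly $1$ (from $27$ to $28$). The idea is to place $k$ essentially independent copies of this obstruction into a single triangulation pair of size $n = k+7$, so that every geodesic must, at its first step, simultaneously be forced to increase conflicts in a way that sums to $k$. First I would make precise the notion of ``independent'' copies: I would arrange the $k$ gadgets so that no edge of one copy conflicts with any edge of another, which makes the total conflict count the sum of the per-copy conflict counts. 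The key structural input is the no-common-edge, no-one-off-edge criterion stated just before the theorem: a pair of size $n$ with no common edges and no one-off edges has distance at least $n$, and combined with the $2n-6$ upper bound this pins the distance sharply enough to control what geodesics can do.

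The main steps, in order, are as follows. First, I would construct the explicit pair $(S,T)$ of size $n = k+7$ and verify it has no common edges and no one-off edges, so that every first geodesic step necessarily creates a new common edge in exactly one of the $k$ obstruction regions while all other regions remain untouched. Second, I would establish the distance lower bound using the criterion from the paragraph preceding the theorem, and match it with a construction realizing a geodesic, so that the distance is determined exactly. Third, and this is the crux, I would argue that any first flip lands in one of the copied obstruction regions, and that within whichever region is chosen the local situation is forced to behave like the size-8 example: the only conflict-reducing neighbor there raises the local conflict count. Since the regions are conflict-independent, the total change in conflicts at the first step equals the local change, and I would need to ensure the construction is set up so that this forced local increase is $k$ rather than $1$.

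The subtle point is that a single first flip is a single local move, which naively would raise conflicts by only $1$, not $k$. To get an increase of exactly $k$ at the very first step, I would instead design the gadget so that the first flip sits at a ``bottleneck'' chord that simultaneously conflicts with $k$ distinct edges of $T$: flipping it destroys the single edge that was compatible with $T$ and introduces a chord crossing $k$ target edges at once. This is where I would lean on the one-off/common-edge bookkeeping to guarantee that no alternative first flip avoids the increase, so that every geodesic is forced through the bottleneck. Concretely, I would take a long chord (a ``fan'' from one vertex) in $S$ whose unique geodesic-respecting replacement must first pass through a chord that crosses $k$ of the long target chords in $T$.

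The step I expect to be the main obstacle is the third one: ruling out all alternative first moves. It is easy to exhibit one geodesic that increases conflicts by $k$; the real content is that \emph{every} geodesic must do so, i.e.\ that no first flip both reduces (or preserves) conflicts and begins a shortest path. The earlier theorems handled this by exhaustive computer search on a fixed small example, but here $n$ grows with $k$, so exhaustion is unavailable and a structural argument is required. I would address this by using the one-off-edge characterization to enumerate which first flips can possibly begin a geodesic---these are flips creating a new common edge---and then computing directly that each such flip increases the conflict count by exactly $k$, while flips that keep conflicts non-increasing fail to create any common edge and hence cannot start a shortest path of the forced length. Closing this gap rigorously, rather than by appeal to computation, is the delicate part, and it is precisely the ``methods described in the proof of Theorem \ref{thmbiggap}'' that the proof of Theorem 1 alludes to.
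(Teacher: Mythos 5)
Your pivot away from gluing $k$ copies of the size-8 gadget is the right instinct (a pasted-together pair could only force a jump of $1$ at the first step, as you yourself note), and your fan/bottleneck idea is genuinely close to the paper's construction: the paper takes $S$ to be essentially a fan from one vertex with one extra edge (dual tree $(10)^{k-2}11000$) and $T$ a fan from an adjacent vertex with three extra edges (dual tree $1(10)^{k-5}1101010000$), so that the forced first flip removes an edge crossing $3$ edges of $T$ and inserts one crossing $n-4$ of them, a net rise of $n-7=k$. The genuine gap is in the step you correctly identify as the crux --- ruling out all alternative first moves --- where your proposed criterion is not just incomplete but wrong. You claim the first flips that can begin a geodesic are exactly those creating a new common edge, and that conflict-non-increasing flips create no common edge and hence cannot start a shortest path. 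But a flip creates a common edge precisely when the flipped edge is one-off, and the whole construction is arranged so that $(S,T)$ has \emph{no} one-off edges: consequently \emph{no} first flip creates a common edge, and your criterion would rule out every first move, including the genuine geodesic one. The correct bookkeeping distinguishes flips that create a new \emph{one-off} edge from those that create neither: the forced first flip creates a one-off edge (turning the long fan edge into one), which launches a cascade of one-off flips realizing distance exactly $n$; flips creating neither a common nor a one-off edge leave a pair still at distance $\geq n$, giving total length $\geq n+1$; and there remains one other flip that also creates a one-off edge, which the paper eliminates by a two-step lookahead (after it and the resulting one-off move one reaches a pair with one common edge, $n-2$ remaining edges, and no one-off edges, so distance $\geq n-1$ persists and the total is again $\geq n+1$). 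Without this common-edge versus one-off-edge distinction and the lookahead argument, your outline cannot be closed.

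A secondary problem is that you never write down an explicit pair, so nothing in your third step can actually be computed: your bottleneck description (``destroys the single edge that was compatible with $T$'') does not match the configuration that works, since in the paper's example the removed edge is \emph{not} compatible with $T$ --- it has $3$ conflicts, and the jump of $k$ arises as the difference $(n-4)-3$. Also, the $2n-6$ diameter bound you invoke plays no role; the distance is pinned down by matching the no-common-edge/no-one-off-edge lower bound of $n$ against an explicitly exhibited path of length $n$.
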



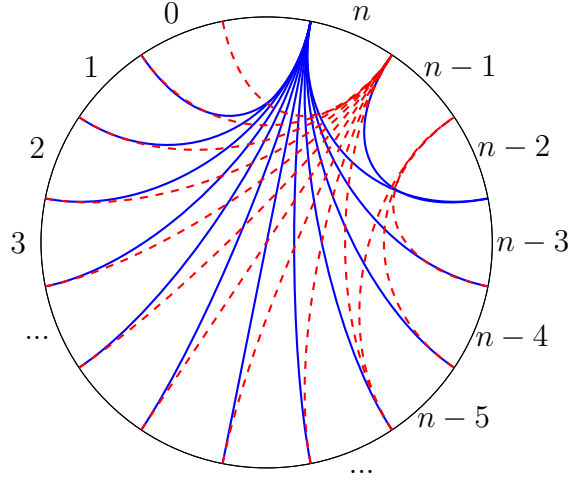
\begin{figure}
\begin{tikzpicture}[scale=3]
    \tikzset{chord/.style={blue, thick}}

    \draw (0,0) circle [radius=1];
	\coordinate(v0) at (-0.1951,0.9808) {};
	\coordinate(v1) at (-0.5556,0.8315) {};
	\coordinate(v2) at (-0.8315,0.5556) {};
	\coordinate(v3) at (-0.9808,0.1951) {};
	\coordinate(v4) at (-0.9808,-0.1951) {};
	\coordinate(v5) at (-0.8315,-0.5556) {};
	\coordinate(v6) at (-0.5556,-0.8315) {};
	\coordinate(v7) at (-0.1951,-0.9808) {};
	\coordinate(v8) at (0.1951,-0.9808) {};
	\coordinate(v9) at (0.5556,-0.8315) {};
	\coordinate(v10) at (0.8315,-0.5556) {};
	\coordinate(v11) at (0.9808,-0.1951) {};
	\coordinate(v12) at (0.9808,0.1951) {};
	\coordinate(v13) at (0.8315,0.5556) {};
	\coordinate(v14) at (0.5556,0.8315) {};
	\coordinate(v15) at (0.1951,0.9808) {};

	\draw[chord] (v1) .. controls (-0.2778,0.4157) and (0.0975,0.4904) .. (v15);
	\draw[chord] (v2) .. controls (-0.4157,0.2778) and (0.0975,0.4904) .. (v15);
	\draw[chord] (v3) .. controls (-0.4904,0.0975) and (0.0975,0.4904) .. (v15);
	\draw[chord] (v4) .. controls (-0.4904,-0.0975) and (0.0975,0.4904) .. (v15);
	\draw[chord] (v5) .. controls (-0.4157,-0.2778) and (0.0975,0.4904) .. (v15);
	\draw[chord] (v6) .. controls (-0.2778,-0.4157) and (0.0975,0.4904) .. (v15);
	\draw[chord] (v7) .. controls (-0.0975,-0.4904) and (0.0975,0.4904) .. (v15);
	\draw[chord] (v8) .. controls (0.0975,-0.4904) and (0.0975,0.4904) .. (v15);
	\draw[chord] (v9) .. controls (0.2778,-0.4157) and (0.0975,0.4904) .. (v15);
	\draw[chord] (v10) .. controls (0.4157,-0.2778) and (0.0975,0.4904) .. (v15);
	\draw[chord] (v11) .. controls (0.4904,-0.0975) and (0.0975,0.4904) .. (v15);
	\draw[chord] (v12) .. controls (0.4904,0.0975) and (0.0975,0.4904) .. (v15);
	\draw[chord] (v12) .. controls (0.4904,0.0975) and (0.2778,0.4157) .. (v14);
	
	 \tikzset{chord/.style={red, dashed, thick}}

    \draw (0,0) circle [radius=1];
	\node(0) at (-0.4210,1.0163) {$0$};
	\node(1) at (-0.7778,0.7778) {$1$};
	\node(2) at (-1.0163,0.4210) {$2$};
	\node(3) at (-1.1000,-0.0000) {$3$};
	\node(4) at (-1.0163,-0.4210) {$...$};
	\node(8) at (0.4210,-1.0163) {$...$};
	\node(9) at (0.8278,-0.7778) {$n-5$};
	\node(10) at (1.0863,-0.4210) {$n-4$};
	\node(11) at (1.1800,0.0000) {$n-3$};
	\node(12) at (1.0963,0.4210) {$n-2$};
	\node(13) at (0.8578,0.7778) {$n-1$};
	\node(14) at (0.4210,1.0163) {$n$};
	\coordinate(v0) at (-0.1951,0.9808) {};
	\coordinate(v1) at (-0.5556,0.8315) {};
	\coordinate(v2) at (-0.8315,0.5556) {};
	\coordinate(v3) at (-0.9808,0.1951) {};
	\coordinate(v4) at (-0.9808,-0.1951) {};
	\coordinate(v5) at (-0.8315,-0.5556) {};
	\coordinate(v6) at (-0.5556,-0.8315) {};
	\coordinate(v7) at (-0.1951,-0.9808) {};
	\coordinate(v8) at (0.1951,-0.9808) {};
	\coordinate(v9) at (0.5556,-0.8315) {};
	\coordinate(v10) at (0.8315,-0.5556) {};
	\coordinate(v11) at (0.9808,-0.1951) {};
	\coordinate(v12) at (0.9808,0.1951) {};
	\coordinate(v13) at (0.8315,0.5556) {};
	\coordinate(v14) at (0.5556,0.8315) {};
	\coordinate(v15) at (0.1951,0.9808) {};

	\draw[chord] (v0) .. controls (-0.0975,0.4904) and (0.2778,0.4157) .. (v14);
	\draw[chord] (v1) .. controls (-0.2778,0.4157) and (0.2778,0.4157) .. (v14);
	\draw[chord] (v2) .. controls (-0.4157,0.2778) and (0.2778,0.4157) .. (v14);
	\draw[chord] (v3) .. controls (-0.4904,0.0975) and (0.2778,0.4157) .. (v14);
	\draw[chord] (v4) .. controls (-0.4904,-0.0975) and (0.2778,0.4157) .. (v14);
	\draw[chord] (v5) .. controls (-0.4157,-0.2778) and (0.2778,0.4157) .. (v14);
	\draw[chord] (v6) .. controls (-0.2778,-0.4157) and (0.2778,0.4157) .. (v14);
	\draw[chord] (v7) .. controls (-0.0975,-0.4904) and (0.2778,0.4157) .. (v14);
	\draw[chord] (v8) .. controls (0.0975,-0.4904) and (0.2778,0.4157) .. (v14);
	\draw[chord] (v9) .. controls (0.2778,-0.4157) and (0.2778,0.4157) .. (v14);
	\draw[chord] (v9) .. controls (0.2778,-0.4157) and (0.4157,0.2778) .. (v13);
	\draw[chord] (v10) .. controls (0.4157,-0.2778) and (0.4157,0.2778) .. (v13);
	\draw[chord] (v11) .. controls (0.4904,-0.0975) and (0.4157,0.2778) .. (v13);

\end{tikzpicture}
\caption{Triangulation pairs with more conflicts arising along the mandatory first flip along a geodesic from $S=(10)^{i+3}11000$ in solid blue to $T=1(10)^{i}1101010000$ in dashed red. \label{bigexamples}}
\end{figure}

\begin{proof}
We consider triangulation pairs that are larger versions of the $S$ and $T$ used in Section \ref{secexample}. For a given $k$, we let $S$ be the triangulation whose dual tree has encoding $(10)^{k-2}11000$ and $T$ be that corresponding to $1(10)^{k-5}1101010000$.  These triangulations are shown in Figure \ref{bigexamples}.  The triangulation $S$ has edges from the counterclockwise end of interval $n$ to vertices at the ends of intervals $0$ through $n-3$, and one additional edge from the end of interval $n-3$ to $n-1$.  The triangulation $T$ has edges from the end of interval $n-1$ to the vertices at the end of the root interval through the end of interval $n-6$, and three edges from vertex $n-2$ to $n-6$ through $n-4$. There are no common edges or one-off edges in $S$ with respect to $T$, thus the distance between them is at least $n$.  Flipping the edge in $S$ from  $n-3$ to $n-1$ creates a comb from the end of $n$ and turns the edge from $n$ to $n-3$ in $S$ into a one-off edge.  Flipping that edge results in another one-off edge, and in fact a sequence of one-off edges is created by successively creating the common edges, giving a path $\gamma$ of length $n$ from $S$ to $T$.  Since we know the distance is at least $n$, the path $\gamma$ is a geodesic and the distance is indeed $n$.  
For other possible initial flips along alternate paths $\gamma'$, we consider first
the edge flip in $S$ of the edge from $n$ to $n-3$ to an edge from $n-1$ to $n-4$ to get triangulation $S'$.  This results in no new common edge, but a new one-off edge.  Following the one-off move and flipping the edge from $n-3$ to $n-1$ to an edge from $n-4$ to $n-2$ then results in a triangulation $S''$ that has a common edge with $T$ but $n-2$ edges where none is a one-off edge, meaning that the distance from from $S''$ to $T$ is at least $n-1$, resulting in a total length of at least $n+1$ from $S$ to $T$  and  thus longer than that for $\gamma$. Thus  $\gamma'$ or any path that begins with that same initial edge flip cannot be a geodesic.

 Any other initial edge-flip in $S$ does not create a one-off edge, and would result in a triangulation pair $(S',T)$ with no common or one-off edges and necessarily be still at distance at least $n$, resulting in a path of length $n+1$ or longer.  So any geodesic from $S$ to $T$ must begin with the same edge flip as $\gamma$ (in fact, this geodesic $\gamma$ is unique as can be seen by analyzing the following steps along the geodesic for possible created common edges.)  

The number of conflicts increases by $k$ for that first edge flip as the edge from $n-3$ to $n-1$ crosses 3 edges and it is flipped to the edge from $n-2$ to $n$ that conflicts with $n-4$ edges, giving the required rise of $k$ conflicts along every possible geodesic from $S$ to $T$ for examples of size $k+7$.  For larger examples, again we concatenate common identical triangles as needed to reach the desired size.
\end{proof}

Asymptotically, the fact that the number of conflicts can rise by $n-7$ is remarkable in light of the fact that the greatest number of conflicts a single edge can have is $n-1$ if it crosses all other edges in the other triangulation.

Again, we can concatenate two triangulations to create larger more symmetric examples where the number of conflicts must increase in either direction by any specified amount.

\begin{theorem}
For any positive $k$ and any  $n \geq 2k+14$ , there are triangulation pairs $(S,T)$  of size $n$ where every geodesic $\gamma$ from $S$ to $T$  or from $T$ to $S$ has a step along the geodesic where the number of conflicts increases by $k$.
\end{theorem}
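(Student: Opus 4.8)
The plan is to build the symmetric examples by the same concatenation device used in Theorem~\ref{biconflict}, but applied to the larger one-directional examples supplied by Theorem~\ref{thmbiggap}. For a given positive $k$, I would let $(S,T)$ be the size-$(k+7)$ pair of Theorem~\ref{thmbiggap}, for which every geodesic from $S$ to $T$ begins with a flip that raises the conflict count by $k$, and glue this pair to its opposite $(T,S)$ along a peripheral edge. This produces a pair $(U,V)$ on a polygon with $2(k+7)+2 = 2k+16$ sides, that is, of size $2k+14$. By construction one side (call it side $A$) carries $S$ in $U$ and $T$ in $V$, while the other side (side $B$) carries $T$ in $U$ and $S$ in $V$; the edge used in the gluing is common to both $U$ and $V$.

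Next I would record two structural facts. First, by Sleator, Tarjan and Thurston~\cite{stt}, the shared peripheral edge is common to $U$ and $V$ and therefore never flips along any geodesic between them, so every geodesic from $U$ to $V$ (or from $V$ to $U$) is a shuffle of a geodesic on side $A$ and a geodesic on side $B$, each restricted sub-path being a geodesic of its own pair. Second, because the shared edge separates the polygon into the two sides, no chord on side $A$ can cross a chord on side $B$; hence the total conflict count is at all times the sum of the side-$A$ and side-$B$ conflict counts, and any single flip changes only the conflicts on the side where it occurs.

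With these in hand the conclusion follows for each direction separately. Along a geodesic from $U$ to $V$, the side-$A$ moves form a geodesic from $S$ to $T$; I would examine its first side-$A$ move. Before that move, side $A$ still realizes $S$, so by Theorem~\ref{thmbiggap} this flip raises the side-$A$ conflict count by $k$, and by additivity it raises the total conflict count by exactly $k$. Along a geodesic from $V$ to $U$ the identical argument applies to side $B$, whose moves form a geodesic from $S$ to $T$. Larger sizes $n \geq 2k+14$ are then obtained, exactly as in the earlier theorems, by attaching common identical triangles that cannot be disturbed along any geodesic, again by~\cite{stt}.

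I expect the only delicate point to be the decomposition asserted in the second paragraph: justifying that a geodesic on the concatenation really does split into independent geodesics on the two sides, so that a forced increase on one side is neither cancelled nor masked by simultaneous activity on the other. This is precisely where the persistence of the common edge, which rules out interaction between the two sides, and the additivity of conflicts, which rules out cross-cancellation since flips act one at a time, are both needed. Once these are in place the numerical claim is immediate from Theorem~\ref{thmbiggap}.
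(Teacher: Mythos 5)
Your proposal is correct and follows exactly the approach the paper intends: the paper leaves this theorem's proof implicit (``Again, we can concatenate two triangulations to create larger more symmetric examples\ldots''), meaning precisely the doubling device of Theorem~\ref{biconflict} applied to the size-$(k+7)$ pairs of Theorem~\ref{thmbiggap}, with padding by common triangles for larger $n$. The details you supply---persistence of the shared edge by \cite{stt}, the resulting decomposition of geodesics into independent geodesics on the two sides, and additivity of conflicts across the separating edge---are exactly the justifications the paper's sketch relies on.
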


Thus, there are large pairs where the number of conflicts along geodesics must rise in a single step an amount proportional to the size for geodesics in either direction.

\begin{table}
{\tiny
\begin{tabular}{|l|l|| l | l |}
\hline 
\hspace{.45in} S &\hspace{.45in} T & \hspace{.45in} S &\hspace{.45in} T \\
\hline
 10101010101011000 & 11010101101010000 * &
 10101010101100100 & 11010101101001000  \\
 10101010101100100 & 11010111010100000  &
 10101010101100100 & 11011011010100000  \\
 10101010101100100 & 11101011010100000  &
 10101010110010100 & 11010101100110000  \\
 10101010110010100 & 11010111010010000  &
 10101010110010100 & 11011011010010000  \\
 10101010110010100 & 11011110101000000  &
 10101010110010100 & 11101011010010000  \\
 10101010110010100 & 11101110101000000  &
 10101010110010100 & 11110110101000000  \\
 10101010110011000 & 11011110101000000 * &
 10101010110011000 & 11110110101000000  \\
 10101011001010100 & 11010101011100000  &
 10101011001010100 & 11010111001100000  \\
 10101011001010100 & 11011011001100000  &
 10101011001010100 & 11011110100100000  \\
 10101011001011000 & 11011110100100000 *  &
 10101011001011000 & 11110110100100000  \\
 10101011001011000 & 11111101010000000 * &
 10101011001100100 & 11111101010000000 * \\
 10101011001101000 & 11111101010000000  &
 10101011001110000 & 11111101010000000  \\
 10101011110001000 & 11010100110100100  &
 10101011110010000 & 11010100110011000  \\
 10101011110100000 & 11010100101110000  &
 10101100101011000 & 11011110011000000  \\
 10101100101011000 & 11110110011000000 * &
 10101100101011000 & 11111101001000000  \\
 10101100101100100 & 11111101001000000  &
 10101100101101000 & 11111101001000000  \\
 10101101010110000 & 11010110101000100  &
 10101101011001000 & 11010110100100100  \\
 10101111100100000 & 11010010101110000  &
 10101111100100000 & 11010011100110000  \\
 10101111100100000 & 11011000101110000 * &
 10101111100100000 & 11110111100000000  \\
 10101111101000000 & 11010011011100000 *  &
 10110010101100100 & 11010101101001000 * \\
 
 \hline
\end{tabular}}
\caption{Representatives of the 40 tree pair equivalence classes of size 8 where all neighbors of $S$ which are closer to $T$ in the rotation graph have more conflicts with $T$ than $S$ does. Those pairs marked with asterisks are drawn as triangulation pairs in Figure \ref{fig:selectedpairs}. \label{bad8}}
\end{table}
We do note that this kind of behavior where every geodesic has an increase in conflicts along the path somewhere are somewhat rare.  In size 8, there are 40 equivalence classes  of examples where the first step along any geodesic from $S$ to $T$ will necessarily increase the number of conflicts, from among the 117,260 equivalence classes of edge-flip distance problems of size 8.  Representatives of each of these equivalence classes are tabulated in Table \ref{bad8}, with a selection of those pairs drawn in Figure \ref{fig:selectedpairs}.
For size 9, the corresponding fraction is also small- there are 632 equivalence classes where all geodesics have an increase in conflicts at the first step, out of 1,108,536 classes of problems.

\begin{figure}
    \centering
    \begin{tabular}{ccc}
        \subfloat[
                 ]{\includegraphics[scale=\scaleSizeB]{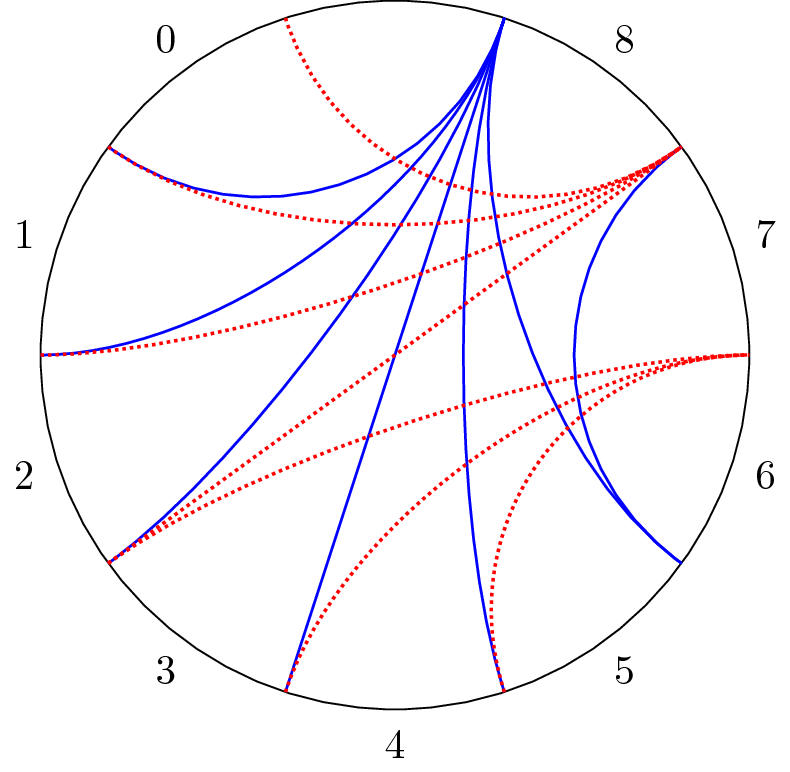}} &
        \subfloat[
                 ]{\includegraphics[scale=\scaleSizeB]{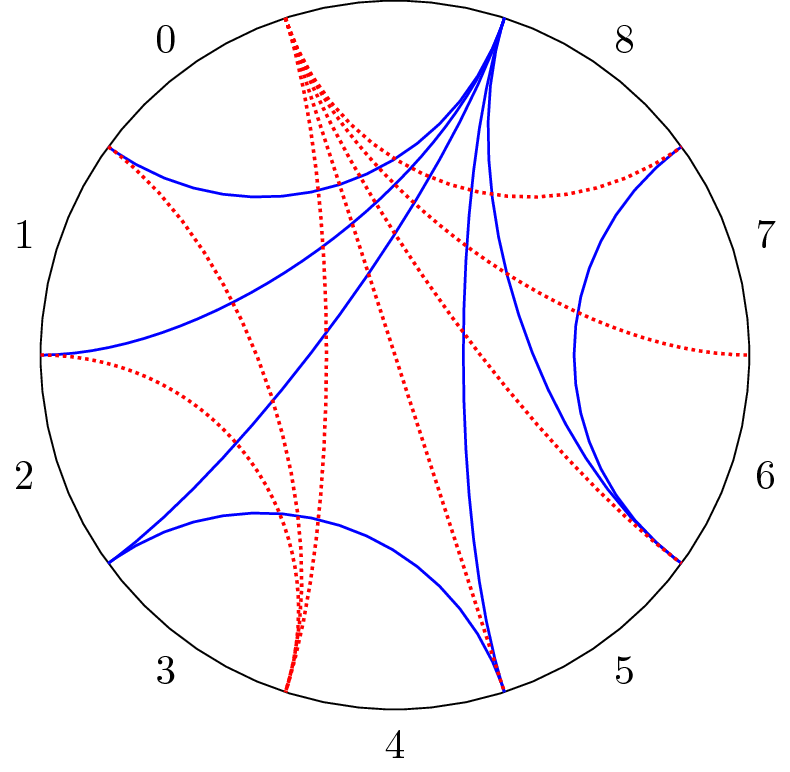}} &
        \subfloat[
                 ]{\includegraphics[scale=\scaleSizeB]{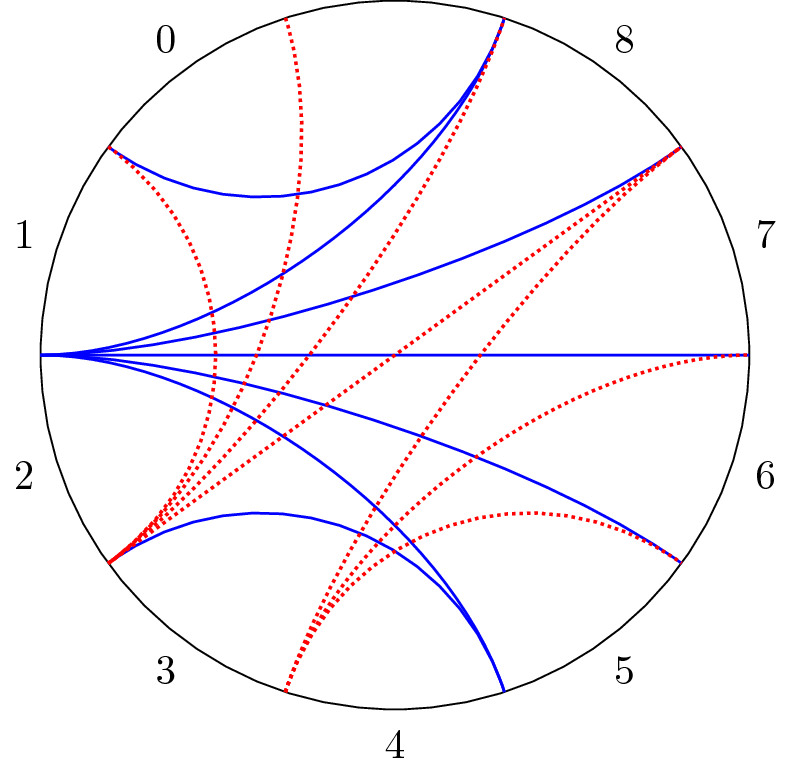}} \\
        \subfloat[
                 ]{\includegraphics[scale=\scaleSizeB]{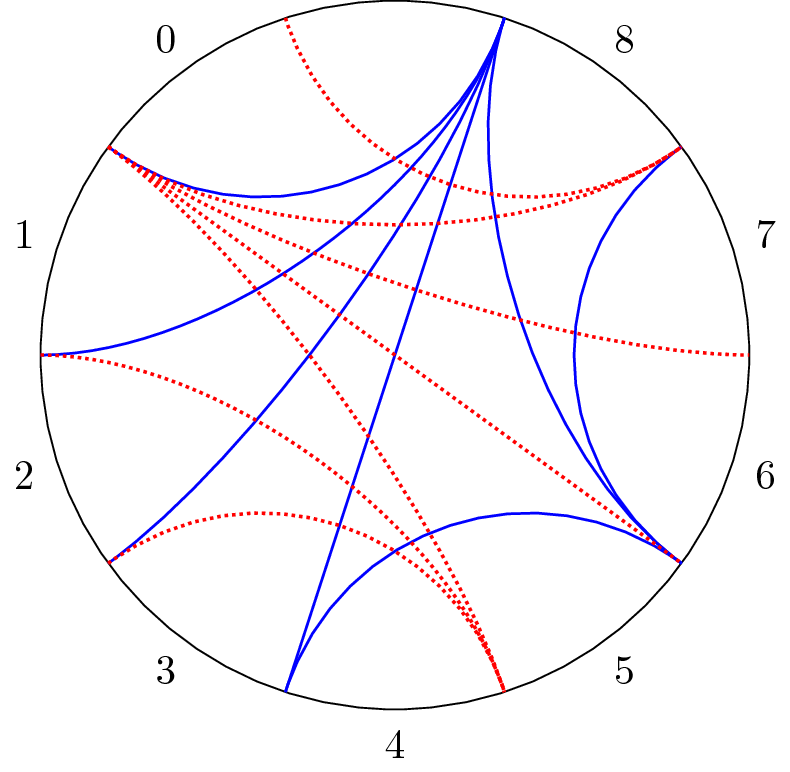}} &
        \subfloat[
                 ]{\includegraphics[scale=\scaleSizeB]{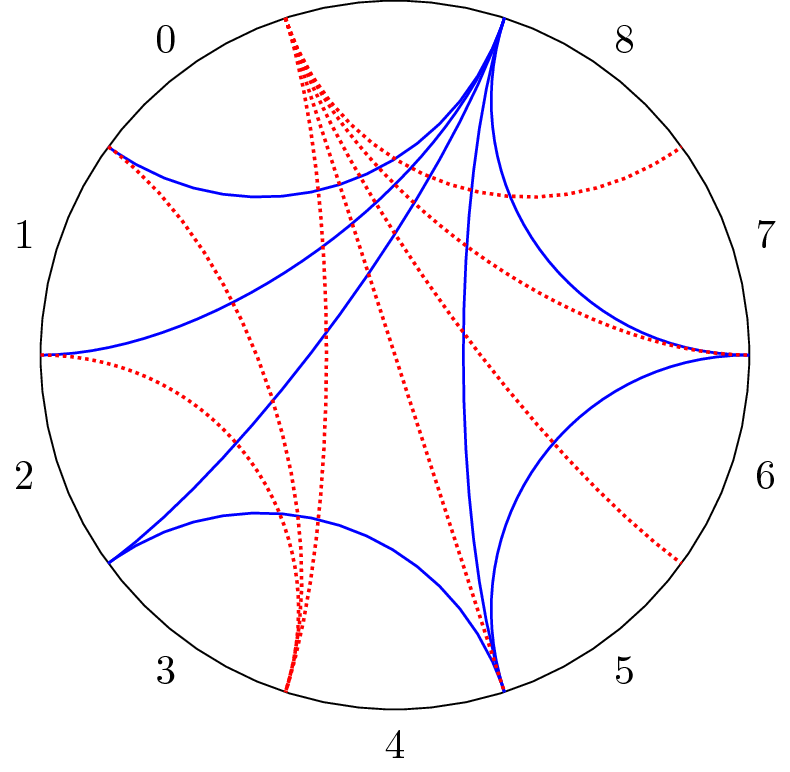}} &
        \subfloat[
                 ]{\includegraphics[scale=\scaleSizeB]{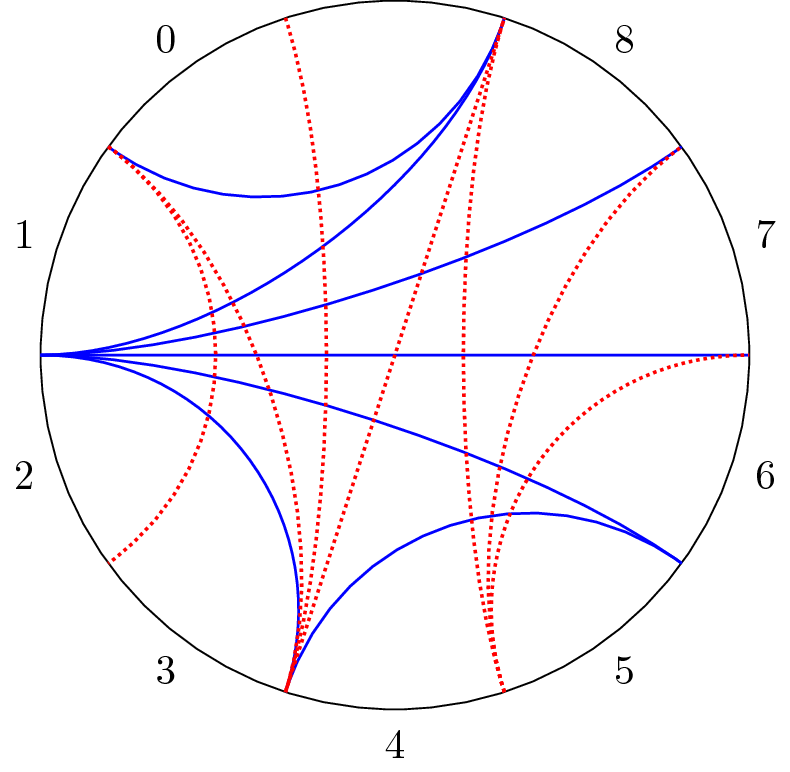}} \\
        \subfloat[
                 ]{\includegraphics[scale=\scaleSizeB]{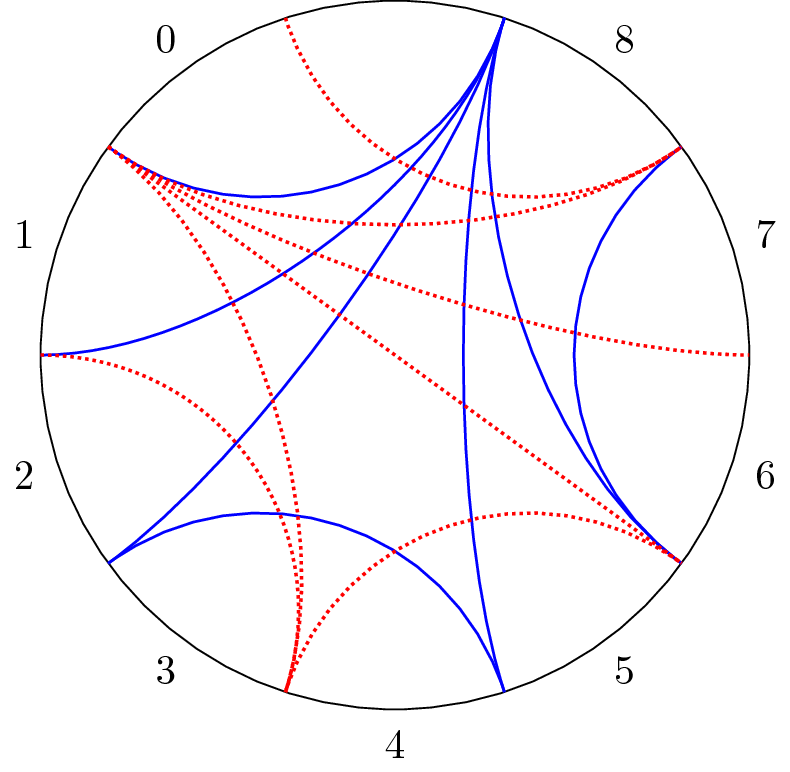}} &
        \subfloat[
                 ]{\includegraphics[scale=\scaleSizeB]{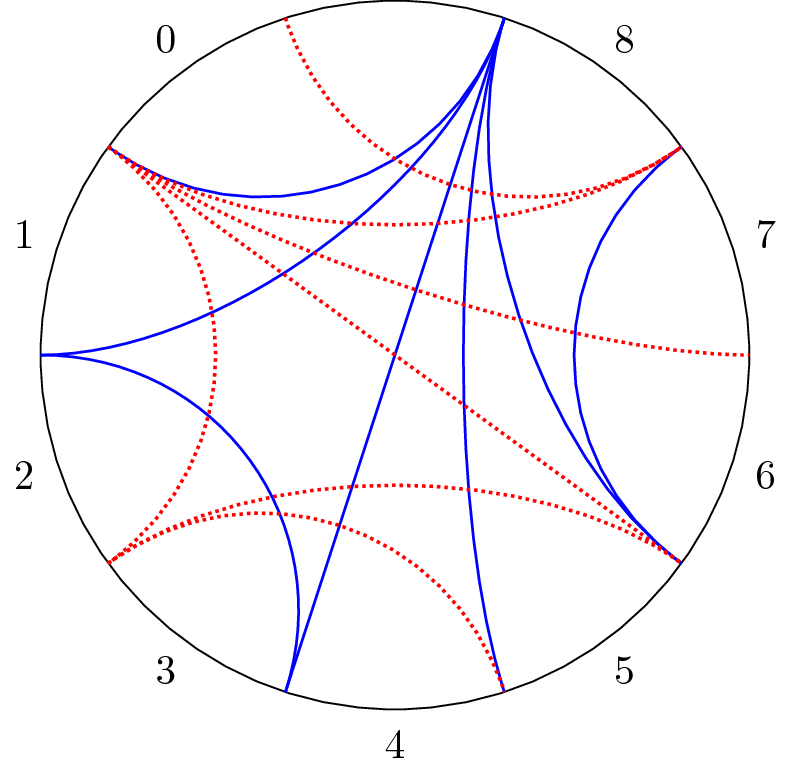}} &
        \subfloat[
                 ]{\includegraphics[scale=\scaleSizeB]{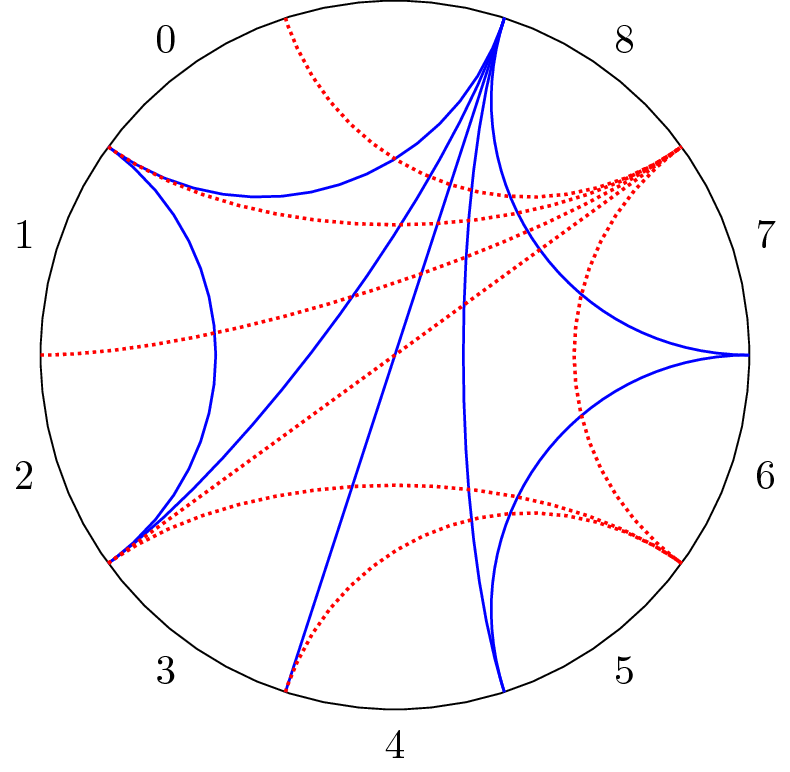}}
    \end{tabular}
    \caption{The asterisked tree pairs from Table \ref{bad8}. These selected pairs illustrate a number of differences, including the number of possible geodesics varying from $1$ to $6$. The pairs in the first row have a unique geodesic, those in the second row have exactly $2$ geodesics, and in the third row all have $6$. The chords rendered in solid blue are those of $S$ and the chords rendered in dotted red are those of $T$. Each geodesic proceeds by flipping the chords of $S$.}
    \label{fig:selectedpairs}
\end{figure}

Though the examples above show that conflict-reducing algorithms do not always give the correct distances, they often do give correct distances and when they do not, typically the gap between  the simplest conflict-based estimate and the true geodesic distance is generally not large.  

In the case of the 117,260 equivalence classes of size 8 edge-flip distance problems of which the earlier example $(S,T)$ is one instance, the lexicographically-least minimal conflict greedy algorithm is correct in 111,061 cases, with 5771 cases overestimating the distance by 1, 423 cases of an overestimate by 2, and the furthest it is ever off by is 3, which happens in 5 cases.  That results in about a 6\% overestimate of distance on average across all problems of size 8.

For the 1,108,536 equivalence classes of distance problems of size 9, despite the 632 cases where  all the geodesics begin with an increase in conflicts, the naive greedy algorithm gives the geodesic distance correctly in more than 1 million cases.  In the cases where it is incorrect, in 4 cases  the overestimate of the distance is 4, in 444 cases the overestimate is  3, in 7047 cases it is off by 2, and  in  80,710 cases the algorithm overestimates by 1.   On average, this greedy algorithm overstates the distance by about .0867378.  Similarly, in the case of size 10 for triangulations of the 11-gon,  the vast majority of distances (89\%) are correctly determined with an average of 0.11888 overestimate of the distance.

%

\bibliographystyle{plain}

\begin{thebibliography}{10}

\bibitem{barilpallo}
Jean-Luc Baril and Jean-Marcel Pallo.
\newblock Efficient lower and upper bounds of the diagonal-flip distance
  between triangulations.
\newblock {\em Information Processing Letters}, 100(4):131--136, 2006.

\bibitem{conflicts}
Timothy Chu and Sean Cleary.
\newblock Expected conflicts in pairs of rooted binary trees.
\newblock {\em Involve}, 6(3):323--332, 2013.

\bibitem{commonedges}
Sean Cleary, Andrew Rechnitzer, and Thomas Wong.
\newblock Common edges in rooted trees and polygonal triangulations.
\newblock {\em Electron. J. Combin.}, 20(1):Paper 39, 22, 2013.

\bibitem{rotfpt}
Sean Cleary and Katherine St.~John.
\newblock Rotation distance is fixed-parameter tractable.
\newblock {\em Inform. Process. Lett.}, 109(16):918--922, 2009.

\bibitem{clearylinear}
Sean Cleary and Katherine St.~John.
\newblock A linear-time approximation for rotation distance.
\newblock {\em J. Graph Algorithms Appl.}, 14(2):385--390, 2010.

\bibitem{cw}
Karel Culik and Derick Wood.
\newblock A note on some tree similarity measures.
\newblock {\em Information Processing Letters}, 15(1):39--42, 1982.

\bibitem{hankeottomannschuierer}
Sabine Hanke, Thomas Ottmann, and Sven Schuierer.
\newblock The edge-flipping distance of triangulations.
\newblock {\em J.UCS}, 2(8):570--579 (electronic), 1996.

\bibitem{pournin}
Lionel Pournin.
\newblock The diameter of associahedra.
\newblock {\em Adv. Math.}, 259:13--42, 2014.

\bibitem{charlesmike}
Charles Semple and Mike Steel.
\newblock {\em Phylogenetics}, volume~24 of {\em Oxford Lecture Series in
  Mathematics and its Applications}.
\newblock Oxford University Press, Oxford, 2003.

\bibitem{stt}
Daniel~D. Sleator, Robert~E. Tarjan, and William~P. Thurston.
\newblock Rotation distance, triangulations, and hyperbolic geometry.
\newblock {\em J. Amer. Math. Soc.}, 1(3):647--681, 1988.

\bibitem{stanley1}
Richard~P. Stanley.
\newblock {\em Enumerative combinatorics. {V}ol. {I}}.
\newblock The Wadsworth \& Brooks/Cole Mathematics Series. Wadsworth \&
  Brooks/Cole Advanced Books \& Software, Monterey, CA, 1986.
\newblock With a foreword by Gian-Carlo Rota.

\end{thebibliography}

\end{document}